\newcommand{\spacedequal}[0]{\quad=\quad}
\newcommand{\xspacedequal}[1]{\ensuremath{\quad\overset{\scriptscriptstyle #1}{=}\quad}}
\newcommand{\catname}[1]{\ensuremath{\normalfont\textbf{#1}}}
\newcommand{\catcat}[0]{\catname{Cat}}
\newcommand{\catop}[1]{{#1}^{\normalfont\text{op}}}
\newcommand{\functor}[1]{\ensuremath{\mathcal{#1}}}
\newcommand{\ffunctor}[1]{\ensuremath{\mathcal{#1}}}
\numberwithin{equation}{section}
\declaretheorem[style=plain,sibling=equation]{theorem}
\declaretheorem[style=plain,sibling=equation]{proposition}
\declaretheorem[style=definition,sibling=equation]{definition}
\declaretheorem[style=remark,sibling=equation]{remark}
\declaretheorem[style=definition, sibling=equation]{notation}
\DeclareMathOperator{\id}{Id}
\DeclareMathOperator*{\colim}{colim}
\DeclareMathOperator*{\llim}{bilim}
\DeclareMathOperator*{\ccolim}{bicolim}
\author{Jun Maillard}
\date{\today}
\title{On 2-final 2-functors}
\address{\ \medbreak
  Univ.\ Lille, CNRS, UMR 8524 - Laboratoire Paul Painlevé, F-59000 Lille, France}
\email{jun.maillard@gmail.com}
\urladdr{http://math.univ-lille1.fr/~{}jmaillard}
\thanks{Author supported by Project ANR ChroK (ANR-16-CE40-0003) and Labex CEMPI (ANR-11-LABX-0007-01).}
\subjclass[2010]{18A05, 18A22, 18A25, 18D05, 55Q05}
\keywords{Final functors, 2-functors, higher connectivity.}
\tikzset{tip/.style={}}
\tikzset{dot/.style={circle, fill, minimum size=4pt, inner sep=0pt, outer sep=0pt}}
\tikzset{iden/.style={draw, circle, minimum size=4pt, inner sep=0pt, outer sep=0pt}}
\tikzset{naturaltr/.style={draw, rounded rectangle,
    minimum height=0.6cm, minimum width=1.8cm, inner sep=1mm}}
\begin{document}
\maketitle

A final functor between categories $\functor{F} \colon \catname{A} \to
\catname{B}$ is a functor that allows the restriction of diagrams on $\catname{B}$ to $\catname{A}$
without changing their colimits. More precisely, the functor $\functor{F}$ is final if, for
any diagram $\functor{D} \colon \catname{B} \to \catname{B}$, there is a
canonical isomorphism
\[
  \colim_{\catname{B}}\functor{D} \cong \colim_{\catname{A}}\functor{D} \circ \functor{F}
\]
where either colimit exists whenever the other one does. There is a classical
criterion for final
functors~\autocite[§IX.3]{maclanesaundersCategoriesWorkingMathematician1971}:
a functor $\functor{F} \colon \catname{A} \to \catname{B}$ is final if and only if, for any object $b \in
\catname{B}$, the slice category $b / \functor{F}$ is nonempty and connected.
Such a criterion also exists for $(\infty,
1)$-categories~\autocite[§4.1]{lurieHigherToposTheory2009}: an $(\infty,
1)$-functor $\ffunctor{F} \colon \catname{A} \to \catname{B}$ is final (with respect to any
$\infty$-diagram) if and only if for any object $b \in \catname{B}$, the slice
$\infty$-category $b / \functor{F}$ is weakly contractible. One would expect a
similar result for any dimension: an $(n, 1)$-functor $\ffunctor{F} \colon \catname{A} \to
\catname{B}$ is final (with respect to any $n$-diagram) if and only if, for any
object $b \in \catname{B}$, the slice $(n, 1)$-category $b / \functor{F}$ is
nonempty and has trivial homotopy groups $\pi_k$ for $0 \leq k \leq n-1$. Note
that these are not consequences of the known criterion for $1$-functors and
$(\infty, 1)$-functors (see \cref{rmkHigherPi} and \cref{rmk12Final}). This paper presents a
combinatorial proof in the case $n = 2$~(\cref{thmFinal}). An application of
this criterion will appear in my Ph.D. thesis~\autocite{maillard}.

\section{Bicategorical notions}

We will follow the naming conventions of \autocite{johnsonDimensionalCategories2021}
for bicategories. In particular, the terms $2$-category and $2$-functor
denote the \emph{strict} ones. We will use the term $(2,1)$-category for a
$2$-category with only invertible $2$-morphisms, and the term $(2,1)$-functor
for a $2$-functor between $(2,1)$-categories.

We recall some usual constructions and properties of $2$-categories we will use,
and introduce some notations.

\begin{notation}
  The symbol $\simeq$ denotes an isomorphism between two objects (in the $1$-categorical
  sense).

  The symbol $\cong$ denotes an equivalence between two objects (in the
  $2$-categorical sense).
\end{notation}

\begin{definition}\label{defOpCat}
 Let $\catname{C}$ be a 2-category. The opposite 2-category of $\catname{C}$, written
 $\catop{\catname{C}}$, is the 2-category with
 \begin{itemize}
 \item \emph{Objects:} the objects of $\catname{C}$
 \item \emph{Hom-categories:} $\catop{\catname{C}}(A, B) = \catname{C}(B, A)$
 \end{itemize}
\end{definition}

\begin{notation}
  We write $[\catname{A},\catname{B}]$ for the $2$-category of pseudofunctors,
  pseudonatural transformations and modifications, between two $2$-categories
  $\catname{A}$ and $\catname{B}$.
\end{notation}

\begin{notation}
 Let $\catname{I}, \catname{C}$ be $2$-categories and $T$ be an object of
 $\catname{C}$. We denote by $\Delta{T}$ the constant functor $\catname{I} \to
 \catname{C}$ with value $T$.
\end{notation}

\begin{definition}
 Let $\catname{I}, \catname{C}$ be $2$-categories and $\ffunctor{D} \colon
 \catname{I} \to \catname{C}$ be a $2$-functor. A \emph{(pseudo) bicolimit} of
 $\ffunctor{D}$ is an object $L$ of $\catname{C}$ and a family of equivalences
 \[
   \Psi_{T} \colon \catname{C}(L, T) \cong [\catname{I},\catname{C}](\ffunctor{D}, \Delta{T})
 \]
 pseudonatural in $T$. When it exists, the bicolimit of $\ffunctor{D}$ is unique
 up to equivalence and the object $L$ is noted $\ccolim_{\catname{I}}\ffunctor{D}$.
\end{definition}

\begin{notation}
  We will use the term \emph{$2$-diagram} to denote a $2$-functor we introduce with
  the intent to take its bicolimit.

  We will use the term \emph{cone under $\ffunctor{D}$ with vertex $T$} to denote
  objects of the category of pseudonatural transformations and modifications $[\catname{I},\catname{C}](\ffunctor{D}, \Delta{T})$.
\end{notation}

\begin{definition}
  Let $\catname{C}$ be a $(2,1)$-category. Fix a $(2,1)$-functor $\ffunctor{F}
  \colon \catname{I} \to \catname{C}$ and an object $c$ of $\catname{C}$. The
  \emph{slice} $c / \ffunctor{F}$ is the $(2, 1)$-category with:
  \begin{itemize}
  \item \emph{Objects:} the pairs $(i, f)$ consisting of an object $i$ of
    $\catname{I}$ and a morphism $f \colon c \to \ffunctor{F}i$
  \item \emph{Morphisms $(i, f) \to (i', f')$:} the pairs $(u, \mu)$ consisting
    of a morphism $u \colon i \to i'$ of $\catname{I}$ and a 2-isomorphism $\mu \colon f' \to
    \ffunctor{F}(u)f$ of $\catname{C}$:
    \[
      \begin{tikzcd}
        \ffunctor{F}i \arrow[rr, "\ffunctor{F}u"] & & \ffunctor{F}i' \\
        & c \arrow[lu, "f", bend left] \arrow[ru, "f'"{name=n}, bend right, swap]
        \arrow[from=n, to=1-1, Rightarrow, "\mu", shorten=0.5cm] &
      \end{tikzcd}
    \]
  \item \emph{2-Morphisms $(u, \mu) \Rightarrow (v, \nu)$:} the 2-morphisms
    $\alpha \colon u \Rightarrow v$ satisfying:
    \[
      \begin{tikzcd}[row sep=2cm]
        \ffunctor{F}i
          \arrow[rr, "\ffunctor{F}v"{name=nv}, bend left]
          \arrow[rr, "\ffunctor{F}u"{name=nu}, bend right, swap] & & \ffunctor{F}i' \\
        & c \arrow[lu, "f", bend left] \arrow[ru, "f'"{name=n}, bend right, swap]
        \arrow[from=n, to=1-1, Rightarrow, "\mu", shorten=0.5cm, shift left=0.4cm]
        \arrow[from=nu, to=nv, Rightarrow, "\ffunctor{F}\alpha", shorten=0.2cm]&
      \end{tikzcd} =
      \begin{tikzcd}[row sep=2cm]
        \ffunctor{F}i \arrow[rr, "\ffunctor{F}v", bend left] & & \ffunctor{F}i' \\
        & c \arrow[lu, "f", bend left] \arrow[ru, "f'"{name=n}, bend right, swap]
        \arrow[from=n, to=1-1, Rightarrow, "\nu", shorten=0.5cm] &
      \end{tikzcd}
    \]
  \item Compositions are induced by the compositions of $\catname{I}$ and $\catname{C}$.
  \end{itemize}
  A slice $2$-category $c/\ffunctor{F}$ is endowed with a canonical forgetful $2$-functor:
  \[
    \left\{
      \begin{array}{lll}
        c / \ffunctor{F} & \to & \catname{I} \\
        (i, f) & \mapsto & i \\
        (u, \mu) & \mapsto & u \\
        \alpha & \mapsto & \alpha
      \end{array}
    \right.
  \]
\end{definition}

\section{Combinatorial paths and homotopies}

A 2-category $\catname{C}$ has an associated CW-complex $|\catname{C}|$, defined using the
Duskin nerve~\autocite[§5.4]{johnsonDimensionalCategories2021}, which maps
objects of $C$ to vertices, $1$-morphisms to 1-simplices and $2$-morphisms to
2-simplices. There are thus notions of paths and homotopies of paths in
$\catname{C}$. We give in this section a combinatorial approach to these, for
$(2,1)$-categories.

We fix a $(2,1)$-category $\catname{C}$.

\begin{definition}
  A \emph{path} (of 1-morphism) in \catname{C} is a finite sequence of objects $(a_i)_{0 \leq i \leq
    n}$ and a family of pairs $(\varepsilon_i, f_i)_{1 \leq i \leq n}$ of a sign
  $\varepsilon \in \{ -1, 1 \}$ and a morphism
  \[
    f_i \colon \left\{
      \begin{array}{llll}
        a_{i-1} & \to & a_i & \text{if } \varepsilon_i = 1 \\
        a_{i} & \to & a_{i-1} & \text{if } \varepsilon_i = -1 \\
      \end{array}
    \right.
  \]
  Such a path is said to have source $a_0$ and target $a_n$.
\end{definition}

\begin{notation}
 We write $p \colon a_0 \leadsto a_n$ to denote a path with source $a_0$ and
 target $a_n$.
\end{notation}

A path can be pictured as a zig-zag of morphisms (potentially with consecutive
morphisms in the same direction):
\[
  a_0 \xrightarrow{f_1} a_1 \xleftarrow{f_2} a_2 \xleftarrow{f_3} \ldots
  \xrightarrow{f_n} a_n
\]
Following the usual conventions, left-to-right arrows represents pairs with
$\varepsilon = 1$ and right-to-left arrows pairs with $\varepsilon = -1$. The
empty path (at an object $a$) should be represented by $a$.

There is an obvious notion of concatenation of paths with compatible target and
source, given by the concatenation of the sequence of morphisms.

\begin{definition}
  We say two path $p, p'$ of $\catname{C}$ are \emph{elementary homotopic},
  written $p \sim_{\text{elem}} p'$, in any of the following cases:
  \begin{enumerate}[label=(\arabic*)]
  \item \label{ElemHtpyId1} $a \xrightarrow{\id} a \: \sim_{\text{elem}} \: a$, for any object $a$
  \item \label{ElemHtpyId2} $a \xleftarrow{\id} a \: \sim_{\text{elem}} \: a$, for any object $a$
  \item $a_0 \xrightarrow{f_1} a_1 \xrightarrow{f_2} a_2 \: \sim_{\text{elem}} \: a_0
    \xrightarrow{f_2f_1} a_2$, for any composable pair $f_1, f_2$ of morphisms
  \item $a_0 \xleftarrow{f_1} a_1 \xleftarrow{f_2} a_2 \: \sim_{\text{elem}} \: a_0
    \xleftarrow{f_1f_2} a_2$, for any composable pair $f_1, f_2$ of morphisms
  \item \label{ElemHtpyFace} $a_0 \xleftarrow{u} a_1 \xrightarrow{v} a_2 \:
    \sim_{\text{elem}} \:  a_0
    \xrightarrow{u'} a_1' \xleftarrow{v'} a_2$, for any 2-isomorphism
    \[
      \begin{tikzcd}
        & a_1' & \\
        a_0 \arrow[ru, "u'"] \arrow[rr, Rightarrow, shorten=0.5cm] & {} & a_2
        \arrow[lu, "v'", swap] \\
        & a_1 \arrow[lu, "u"] \arrow[ru, "v", swap] &
      \end{tikzcd}
    \]
  \end{enumerate}
  We then define a \emph{homotopy} relation $\sim$ on paths as the smallest congruent
  (for the concatenation of paths), reflexive, symmetric and transitive relation
  encompassing the relation $\sim_{\text{elem}}$.
\end{definition}

\begin{remark}
  We should pause to consider two consequences of \ref{ElemHtpyFace}:
  \begin{itemize}
  \item a 2-morphism $\begin{tikzcd}[cramped] a_0 \arrow[r, bend left=2cm,
      "f_0"{name=n0}] \arrow[r, bend right=2cm, swap, "f_1"{name=n1}] & a_1
      \arrow[from=n0, to=n1, Rightarrow, shorten=0.2cm] \end{tikzcd}$ can be
    arranged into the following square:
    \[
      \begin{tikzcd}
        & a_1 & \\
        a_0 \arrow[ru, "f_0"] \arrow[rr, Rightarrow, shorten=0.5cm] & {} & a_1
        \arrow[lu, "\id", swap] \\
        & a_0 \arrow[lu, "\id"] \arrow[ru, "f_1", swap] &
      \end{tikzcd}
    \]
    This shows, together with \ref{ElemHtpyId1} and \ref{ElemHtpyId2}, that $a_0
    \xrightarrow{f_0} a_1 \: \sim \: a_0 \xrightarrow{f_1} a_1$, as one would expect.
  \item a 1-morphism $a_0 \xrightarrow{f} a_1$ can be used to form the square:
    \[
      \begin{tikzcd}
        & a_1 & \\
        a_1 \arrow[ru, "\id"] \arrow[rr, Rightarrow, shorten=0.5cm] & {} & a_1
        \arrow[lu, "\id", swap] \\
        & a_0 \arrow[lu, "f"] \arrow[ru, "f", swap] &
      \end{tikzcd}
    \]
    Once again using \ref{ElemHtpyId1} and \ref{ElemHtpyId2}, this proves that
    $(a_1 \xleftarrow{f} a_0 \xrightarrow{f} a_1) \sim a_1$. A similar argument
    (putting $f$ on the upper side of the square) shows that $(a_0
    \xrightarrow{f} a_1 \xleftarrow{f} a_0) \sim a_0$. Hence, up to homotopy,
    the paths $a_0 \xrightarrow{f} a_1$ and $a_1 \xleftarrow{f} a_0$ are mutual
    inverses.
  \end{itemize}
\end{remark}

Note that for two paths to be homotopic, they must have the same source and the
same target.

It is natural to look for a category of paths up-to homotopy:
\begin{definition}
  The \emph{(algebraic) fundamental groupoid} $\Pi_1(\catname{C})$ of
  $\catname{C}$ is the 1-category with:
  \begin{itemize}
  \item \emph{Objects:} the objects of $\catname{C}$.
  \item \emph{Morphisms:} the classes of paths between objects modulo the
    homotopy relation.
  \item \emph{Composition} is induced by the concatenation of paths.
  \end{itemize}
\end{definition}
\begin{definition}\label{defConn}
  The (2,1)-category $\catname{C}$ is said to be \emph{connected} if for any
  pair of objects $a, a'$ there is a path with source $a$ and target $a'$.

  The (2,1)-category $\catname{C}$ is said to be \emph{simply connected} if $p
  \sim p'$ for any pair of
  paths $p, p'$ with same source and same target.
\end{definition}

\begin{remark}\label{rmkPiConn}
 A (2,1)-category $\catname{C}$ is nonempty, connected and simply connected if and
 only if its fundamental groupoid $\Pi_1(\catname{C})$ is equivalent to $1$, the
 category with exactly one object and one morphism.
\end{remark}

\begin{remark}\label{rmkHigherPi}
  Given a (2,1)-category $\catname{C}$ which is nonempty, connected and simply
  connected, its nerve $|\catname{C}|$ is not necessarily weakly contractible.
  Indeed higher homotopy groups may be nontrivial. For instance, one can realize
  the sphere $S^2$ as the nerve of the $(2,1)$-category with two objects, two
  parallel 1-morphisms between these objects, and two parallel 2-isomorphisms
  between these 1-morphisms.
\end{remark}

\begin{remark}
  For any algebraic path $p$ in $\catname{C}$, there is an
  associated topological path $|p| \colon I \to |\catname{C}|$. The following
  assertions, which should result from simplicial approximation, motivate
  the definitions of this section:

  The $2$-category $\catname{C}$ is connected (resp. simply connected) if and
  only if the CW-complex $|\catname{C}|$ is connected (resp. simply connected).

  Two algebraic paths $p, p'$ in $\catname{C}$ are homotopic if and only if the
  topological paths $|p|, |p'|$ are homotopic.

  The categories $\Pi_1(\catname{C})$ and $\Pi_1(|\catname{C}|)$ are equivalent.
\end{remark}

\section{A criterion for 2-final 2-functors}

\begin{definition}\label{def2Final}
  A $2$-functor $\ffunctor{F} \colon \catname{A} \to \catname{B}$ between
  $(2,1)$-categories is \emph{$2$-final} if for any 2-diagram $\ffunctor{D} \colon \catname{B} \to
  \catname{E}$, the pseudo bicolimits $\ccolim_{\catname{B}} \ffunctor{D}$ and
  $\ccolim_{\catname{A}}{\ffunctor{D} \circ \ffunctor{F}}$ each exists if and only if the other one
  exists, and the canonical comparison morphism
  \[
    \ccolim_{\catname{A}}{\ffunctor{D} \circ \ffunctor{F}} \to \ccolim_{\catname{B}}{\ffunctor{D}}
  \]
  is an equivalence.
\end{definition}

\begin{remark}
 In the above definition, $\catname{E}$ is only assumed to be a $2$-category.
 However, since $\catname{B}$ is a $(2,1)$-category, the \emph{pseudo}
 bicolimits can be equivalently computed in $\catname{E}_g$, the
 $(2,1)$-category with the objects of $\catname{E}$, the 1-morphisms
 of $\catname{E}$, and the \emph{invertible} 2-morphisms of $\catname{E}$. Hence we could assume
 $\catname{E}$ to be a $(2,1)$-category, without changing the meaning of the definition.
\end{remark}

\begin{remark}\label{rmk12Final}
  A $1$-final $1$-functor $\ffunctor{F} \colon \catname{A} \to \catname{B}$ between
  $1$-categories is a functor such that, for any diagram $\ffunctor{D} \colon \catname{B} \to
  \catname{E}$, the colimits $\colim_{\catname{B}} \ffunctor{D}$ and
  $\colim_{\catname{A}}{\ffunctor{D} \circ \ffunctor{F}}$ each exists if and only if the other one
  exists, and the canonical comparison morphism
  \[
    \colim_{\catname{A}}{\ffunctor{D} \circ \ffunctor{F}} \to \colim_{\catname{B}}{\ffunctor{D}}
  \]
  is an isomorphism.

  A $2$-final $1$-functor $\ffunctor{F} \colon \catname{A} \to \catname{B}$ between
  $1$-categories (seen as $2$-categories with only the identities as
  $2$-morphisms) is $1$-final, since any diagram is also a $2$-diagram. The
  converse is not true, though: there are $1$-final functors which are not
  $2$-final.
\end{remark}

\begin{theorem}\label{thmFinal}
  Let $\catname{A}$, $\catname{B}$ be two $(2,1)$-categories. A 2-functor
  $\ffunctor{F} \colon \catname{A} \to \catname{B}$ is 2-final
  (\cref{def2Final}) if and only if, for any object $b \in \catname{B}$, the
  slice (2,1)-category $b / \ffunctor{F}$ is nonempty, connected and simply
  connected (\cref{defConn}).
\end{theorem}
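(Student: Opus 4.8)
The plan is to reduce $2$-finality to a statement purely about categories of cones, and then to analyze that statement combinatorially. Fix a $2$-category $\catname{E}$, a $2$-diagram $\ffunctor{D} \colon \catname{B} \to \catname{E}$ and an object $T$ of $\catname{E}$. Whiskering with $\ffunctor{F}$ gives a restriction functor
\[
  \ffunctor{F}^{*} \colon [\catname{B}, \catname{E}](\ffunctor{D}, \Delta T) \to [\catname{A}, \catname{E}](\ffunctor{D} \circ \ffunctor{F}, \Delta T),
\]
pseudonatural in $T$. The first step is to show that $\ffunctor{F}$ is $2$-final if and only if $\ffunctor{F}^{*}$ is an equivalence of categories for every choice of $\catname{E}$, $\ffunctor{D}$ and $T$. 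This uses the universal property of bicolimits directly: transporting the (pseudonatural) equivalence $\ffunctor{F}^{*}$ across the defining equivalences $\catname{E}(\ccolim \ffunctor{D}, T) \cong [\catname{B}, \catname{E}](\ffunctor{D}, \Delta T)$ shows that whichever bicolimit exists also corepresents the cones of the other, so both exist and the comparison map is an equivalence. In particular this reduction takes care of the existence clause in \cref{def2Final}.

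For the necessity of the connectivity condition I would test $2$-finality on the representable $2$-diagrams. Let $b$ be an object of $\catname{B}$ and take $\catname{E} = \catgpd$ and $\ffunctor{D} = \catname{B}(b, -)$, which is groupoid-valued since $\catname{B}$ is a $(2,1)$-category. A direct inspection identifies the Grothendieck construction of $\catname{B}(b, \ffunctor{F}-) \colon \catname{A} \to \catgpd$ with the slice $b / \ffunctor{F}$, and identifies a cone under this diagram with vertex $T$ with a functor $\Pi_1(b/\ffunctor{F}) \to T$; hence $\ccolim_{\catname{A}} \catname{B}(b, \ffunctor{F}-) \cong \Pi_1(b/\ffunctor{F})$. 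Similarly $\ccolim_{\catname{B}} \catname{B}(b, -) \cong 1$, since the slice $b/\id_{\catname{B}}$ has $(b, \id_b)$ as a bi-initial object. $2$-finality then forces $\Pi_1(b/\ffunctor{F}) \cong 1$, which by \cref{rmkPiConn} is exactly the assertion that $b/\ffunctor{F}$ is nonempty, connected and simply connected.

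The substantial direction is sufficiency: assuming every slice is nonempty, connected and simply connected, I must show that $\ffunctor{F}^{*}$ is an equivalence for arbitrary $\catname{E}$, $\ffunctor{D}$ and $T$. For essential surjectivity, given a cone $\kappa$ under $\ffunctor{D} \circ \ffunctor{F}$, I build a cone $\lambda$ under $\ffunctor{D}$ by choosing, for each $b$, an object $(a, f \colon b \to \ffunctor{F}a)$ of the nonempty slice $b/\ffunctor{F}$ and setting $\lambda_b \coloneqq \kappa_a \circ \ffunctor{D}f$. This is where the combinatorics of \S2 enters: a path in $b/\ffunctor{F}$ between two choices produces a specified $2$-isomorphism between the corresponding candidate components, obtained by concatenating the coherence cells of $\kappa$ along the zig-zag; connectedness guarantees such a path always exists, and simple-connectedness guarantees — via the relation $\sim$, whose elementary generators \ref{ElemHtpyId1}--\ref{ElemHtpyFace} match exactly the coherence identities needed — that this $2$-isomorphism does not depend on the chosen path. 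Fullness and faithfulness of $\ffunctor{F}^{*}$ follow the same pattern: a modification between the restrictions is propagated from the $\catname{A}$-components to all $\catname{B}$-components using connectedness, and its well-definedness and uniqueness are again reduced to simple-connectedness.

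The main obstacle I anticipate is not the construction of the components $\lambda_b$ but the verification of coherence in the essential-surjectivity step: showing that the $2$-isomorphisms extracted from paths are compatible with composition in $\catname{B}$ and satisfy the pseudonaturality axioms of $\lambda$. The whole point of the combinatorial set-up is to package precisely these coherence conditions, so that each axiom becomes the statement that two explicitly described paths in some slice $b/\ffunctor{F}$ are homotopic, which then holds by simple-connectedness. Making this translation between $2$-categorical coherence and path-homotopy systematic, and keeping track of the signs $\varepsilon_i$ coming from the contravariance of $f \mapsto \ffunctor{D}f$ in the zig-zags, is the delicate bookkeeping at the heart of the proof.
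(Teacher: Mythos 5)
Your proposal is correct and follows essentially the same route as the paper: necessity is obtained by testing $2$-finality on the representable diagram $\catname{B}(b,-)$ and identifying $\ccolim_{a \in \catname{A}}\catname{B}(b,\ffunctor{F}a)$ with $\Pi_1(b/\ffunctor{F})$, while sufficiency is proved by constructing a pseudoinverse to the restriction functor on cone categories, with paths in the slices supplying the coherence $2$-cells and simple connectedness guaranteeing their path-independence. The only cosmetic differences are that the paper takes $\catname{E}=\catcat$ and concludes $\ccolim_{b'\in\catname{B}}\catname{B}(b,b')\cong 1$ via the bicategorical Yoneda lemma where you use $\catgpd$ and a bi-initial object, and that it packages fullness and faithfulness as explicit natural isomorphisms $\id \Rightarrow \functor{K}\functor{L}$ and $\functor{L}\functor{K} \Rightarrow \id$ rather than arguing them directly.
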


We will first prove the backward implication.

Fix a 2-functor $\ffunctor{D} \colon \catname{B} \to \catname{E}$. We will
construct a pseudoinverse to the canonical comparison morphism
\[
  \ccolim_{\catname{A}}{\ffunctor{D} \circ \ffunctor{F}} \to \ccolim_{\catname{B}}{\ffunctor{D}}
\]
This morphism correspond to a family of functors, pseudonatural in $e$:
\[
  \functor{K} \colon [\catname{B}, \catname{E}](\ffunctor{D}, \Delta{e})
  \to [\catname{A}, \catname{E}](\ffunctor{D} \circ \ffunctor{F}, \Delta{e}).
\]
We will construct a pseudoinverse $\ffunctor{L}$ to $\ffunctor{K}$. Given a cone
$\phi \colon \ffunctor{D} \circ \ffunctor{F} \Rightarrow \Delta{e}$, we obtain a
cone $\ffunctor{L}(\phi) \colon \ffunctor{D} \Rightarrow \Delta{e}$ as follows:
\begin{itemize}
\item objects in the slice categories $b / \ffunctor{F}$ define the 1-morphism components
  $\functor{L}(\phi)_b$ (see \cref{defComp}),
\item paths in $b / \ffunctor{F}$ define natural transformations between the
  components (see \cref{defJ}),
\item homotopies between paths ensure the cohesion of these constructions (see \cref{propJInv}).
\end{itemize}

Consider an
arbitrary cone under $\ffunctor{D} \circ \ffunctor{F}$ with vertex $e \in \catname{E}$, that is, a
pseudo natural transformation $\phi \colon \ffunctor{D} \circ \ffunctor{F} \to
\Delta(e)$. We first want to define a cone $\psi$ under $\ffunctor{D}$ with vertex $e$,
using the cone $\phi$.

As a first step, we fix an object $b$ and we want to define the component at $b$
$\psi_b \colon \ffunctor{D}(b) \to e$. Since the slice (2,1)-category $b /
\ffunctor{F}$ is nonempty, we consider the following candidate.
\begin{definition}\label{defComp}
  We fix an object in $b / \ffunctor{F}$, that is an object $a(b) \in
  \catname{A}$ and a morphism $\alpha(b) \colon b \to \ffunctor{F}(a(b))$. Define
  \[
    \psi_{(a(b), \alpha(b))} \colon \begin{tikzcd}
      \ffunctor{D}(b) \arrow[r, "\ffunctor{D}(\alpha(b))"] &
      \ffunctor{D}(\ffunctor{F}(a(b))) \arrow[r, "\phi_{a(b)}"] & e
    \end{tikzcd}
  \]
\end{definition}

We then consider the dependence of $\psi_{(a(b), \alpha(b))}$ on $(a(b),
\alpha(b))$. Fix another object $(a'(b),\alpha'(b)) \in b / \ffunctor{F}$.
Since $b / \ffunctor{F}$ is connected there is a path
\[
  p \colon (a_0, \alpha_0) = (a(b), \alpha(b)) \leadsto (a_n, \alpha_n) =(a'(b),
  \alpha'(b))
\] which can be pictured as:
\[
  \begin{tikzcd}[row sep=2cm]
    \ffunctor{F}(a_0) & \ffunctor{F}(a_1) \arrow[l, "\ffunctor{F}u_1", swap] \arrow[r, "\ffunctor{F}u_2"] &
    \ffunctor{F}(a_2) \arrow[r, phantom, "\cdots"] & \ffunctor{F}(a_n) \\
    & & b
    \arrow[llu, "\alpha_0"{name=n0}, bend left]
    \arrow[lu, "\alpha_1"{name=n1}]
    \arrow[u, "\alpha_2"{name=n2}, swap]
    \arrow[ru, "\alpha_n", bend right, swap]
    \arrow[to=1-2, from=n0, "\mu_1", Rightarrow, shorten=0.3cm]
    \arrow[to=1-2, from=n2, "\mu_2", Rightarrow, shorten=0.3cm]
  \end{tikzcd}
\]

Applying the 2-functor $\ffunctor{D}$ and using the cone $\phi$, we obtain the
pasting diagram:
\begin{equation}\label{DiagDefJ}
  \begin{tikzcd}[row sep=1.5cm, column sep=3cm]
    & \ffunctor{D}\ffunctor{F}(a_0) \arrow[rd, ""{name=nphia0}, bend left] & \\
    \ffunctor{D}(b)
    \arrow[ru, ""{name=nalp0}, bend left, swap]
    \arrow[r] \arrow[rd, ""{name=nalp2},  bend right]
    \arrow[rdd, bend right] &
    \ffunctor{D}\ffunctor{F}(a_1)
    \arrow[u, "\ffunctor{D}\ffunctor{F}(u_1)"description]
    \arrow[d, "\ffunctor{D}\ffunctor{F}(u_2)"description]
    \arrow[r, ""{name=nphia1}] & e \\
    & \ffunctor{D}\ffunctor{F}(a_2)
    \arrow[d, phantom, "\vdots", near start]
    \arrow[ru, ""{name=nphia2}, bend right, swap] & \\
    & \ffunctor{D}\ffunctor{F}(a_n) \arrow[ruu, bend right] &
    \arrow[from=nalp0, to=2-2, "\ffunctor{D}(\mu_1)", Rightarrow, shorten=0.2cm, swap]
    \arrow[from=1-2, to=nphia1, "\phi_{u_1}", Rightarrow, shorten=0.6cm, swap]
    \arrow[from=2-2, to=nalp2, "\ffunctor{D}(\mu_2)^{-1}", Rightarrow, shorten=0.2cm, swap]
    \arrow[from=nphia1, to=3-2, "\phi_{u_2}^{-1}", Rightarrow, shorten=0.6cm, swap]
  \end{tikzcd}
\end{equation}
We can thus define:
\begin{definition}\label{defJ}
  Any path $p \colon (a, \alpha) \leadsto (a', \alpha')$ in $b / \ffunctor{F}$ defines a
  2-isomorphism in~$\catname{E}$
  \[
    j(p) \colon \psi_{(a, \alpha)} \to \psi_{(a', \alpha')}
  \]
  as given by the above pasting diagram \ref{DiagDefJ}.
\end{definition}

\begin{proposition}\label{propJInv}
  For any paths $p, p' \colon (a, \alpha) \leadsto (a', \alpha')$ in $b /
  \ffunctor{F}$ with same source and target,
  \[
    j(p) = j(p').
  \]
\end{proposition}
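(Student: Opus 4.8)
The plan is to show that $j$ is a homotopy invariant and then invoke simple connectedness. Since we are proving the backward implication of \cref{thmFinal}, the slice $b / \ffunctor{F}$ is assumed simply connected, so any two paths $p, p'$ with the same source and target automatically satisfy $p \sim p'$. It therefore suffices to prove that $j$ is constant on each homotopy class. To organize this, consider the relation $p \approx p'$ defined by $j(p) = j(p')$. This is trivially an equivalence relation, and one checks it is a congruence for concatenation as soon as $j$ respects concatenation. Since $\sim$ is by definition the smallest such congruence containing the elementary homotopies, the proposition follows once I verify (a) compatibility of $j$ with concatenation and (b) invariance of $j$ under each of the five elementary moves.

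Compatibility with concatenation is immediate from \cref{defJ}: the pasting diagram \ref{DiagDefJ} attached to a concatenation $p * q$ is the horizontal juxtaposition of the diagrams for $p$ and $q$ glued along their shared component $\psi_{(a', \alpha')}$, so evaluating the paste gives the vertical composite $j(p * q) = j(q) \circ j(p)$. This reduces everything to the five elementary cases.

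For the identity moves \ref{ElemHtpyId1} and \ref{ElemHtpyId2}, the relevant path applies $\ffunctor{D}$ to an identity $1$-morphism and pastes it against $\phi$ along a structural $2$-isomorphism; the unitality axioms of the pseudofunctor $\ffunctor{D}$ together with the unit coherence of the pseudonatural transformation $\phi$ force $j$ to equal the identity $2$-isomorphism, which is exactly $j$ of the empty path. For the composition moves (3) and (4), replacing two consecutive morphisms by their composite $u_2 u_1$ modifies \ref{DiagDefJ} by inserting the compositor of $\ffunctor{D}$ alongside the composition-coherence $2$-isomorphism of $\phi$ relating $\phi_{u_2 u_1}$ to the paste of $\phi_{u_1}$ and $\phi_{u_2}$ (and their inverses in the backward-oriented case). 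These two coherence cells are mutually inverse under the pasting, so the two values of $j$ coincide.

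The main obstacle is the face move \ref{ElemHtpyFace}, the genuinely two-dimensional case. Here the $2$-isomorphism in $b / \ffunctor{F}$ is a $2$-morphism $\beta \colon u \Rightarrow v$ of $\catname{A}$ subject to the slice compatibility condition relating $\mu$ and $\nu$. To show $j$ is unchanged I will apply $\ffunctor{D}$ to this compatibility identity and paste in the $2$-cell coherence axiom of the pseudonatural transformation $\phi$, which relates $\phi_u$ and $\phi_v$ through $\ffunctor{D}\ffunctor{F}(\beta)$. The delicate point is tracking how the structural isomorphisms of $\ffunctor{D}$ interact with $\phi_u$, $\phi_v$, and the slice condition; once both pasting diagrams are rewritten using these coherences they become equal, which completes the verification and hence the proof.
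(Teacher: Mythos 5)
Your proposal follows essentially the same route as the paper: reduce to elementary homotopies via the equivalence relation $j(p)=j(p')$ together with compatibility of $j$ with concatenation, dispatch the unit and composition moves by coherence, and handle the face move \ref{ElemHtpyFace} by pasting the image under $\ffunctor{D}$ of the slice compatibility condition against the $2$-cell pseudonaturality axiom of $\phi$ --- exactly the two identities the paper records as \eqref{ZetaMorphSlice} and \eqref{PsNatPhiZeta}. The only cosmetic difference is that you invoke unitors and compositors of $\ffunctor{D}$, which are identities here since the paper's $2$-functors are strict, so in the first four cases all the coherence comes from the pseudonaturality of $\phi$ alone.
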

\begin{proof}
  We first prove that two elementary homotopic paths $p \sim_{\text{elem}} p'$ induce the same
  2-isomorphism $j(p) = j(p')$. The four first cases are immediate consequences
  of the pseudonaturality of $\phi$. We can thus assume that
  \[
    p =
    \begin{tikzcd}
      \ffunctor{F}(a_0) & \ffunctor{F}(a_1) \arrow[l, "\ffunctor{F}u", swap]
      \arrow[r, "\ffunctor{F}v"] & \ffunctor{F}(a_2) \\
      & b \arrow[lu, bend left, ""{name=n1}] \arrow[u] \arrow[ru, bend right,
      ""{name=n2}, swap] &
      \arrow[from=n1, to=1-2, "\mu", Rightarrow, shorten=0.2cm]
      \arrow[from=n2, to=1-2, "\nu", Rightarrow, shorten=0.2cm, swap]
    \end{tikzcd}
  \]
  \[
    p' =
    \begin{tikzcd}
      \ffunctor{F}(a_0) \arrow[r, "\ffunctor{F}u'"] & \ffunctor{F}(a_1') &
      \ffunctor{F}(a_2) \arrow[l, "\ffunctor{F}v'", swap] \\
      & b \arrow[lu, bend left] \arrow[u] \arrow[ru, bend right] &
      \arrow[from=2-2, to=1-1, "\mu'", Rightarrow, shorten=0.2cm, swap]
      \arrow[from=2-2, to=1-3, "\nu'", Rightarrow, shorten=0.2cm]
    \end{tikzcd}
  \]
  and that there is a 2-isomorphism $\zeta \colon u'u \Rightarrow v'v$ such that
  \[
    \begin{tikzcd}[column sep=0.25cm, baseline=(current bounding box.center)]
      & {} \arrow[d, Rightarrow, "\mu'", shorten=0.2cm, swap] & \ffunctor{F}(a_1') & \\
      b \arrow[rru, bend left] \arrow[r] \arrow[rrd, bend right] &
      \ffunctor{F}(a_0)
        \arrow[ru, "\ffunctor{F}u'"]
        \arrow[d, Rightarrow,"\mu", shorten=0.2cm, swap]
        \arrow[rr, Rightarrow, "\ffunctor{F}\zeta", shorten=0.5cm]
      & & \ffunctor{F}(a_0) \arrow[lu, "\ffunctor{F}v'", swap] \\
      & {} & \ffunctor{F}(a_1) \arrow[lu, "\ffunctor{F}u"] \arrow[ru,
      "\ffunctor{F}v", swap] &
    \end{tikzcd} \quad = \quad
    \begin{tikzcd}[baseline=(current bounding box.center)]
      & & \ffunctor{F}(a_1') \arrow[d, Rightarrow, "\nu'", shorten=0.2cm] & \\
      b \arrow[rru, bend left] \arrow[rrr] \arrow[rrd, bend right] & & {}
      \arrow[d, Rightarrow, "\nu", shorten=0.2cm]& \ffunctor{F}(a_0)
      \arrow[lu, "\ffunctor{F}v'", swap] \\
      & & \ffunctor{F}(a_1) \arrow[ru, "\ffunctor{F}v", swap] &
    \end{tikzcd}
  \]
  We can apply the functor $\ffunctor{D}$ and express this relation using
  string diagrams (see \autocite[§3.7]{johnsonDimensionalCategories2021}):
  \begin{equation} \label{ZetaMorphSlice}
    \begin{tikzpicture}[baseline=(current bounding box.center)]
      \node[tip] (sta0) at (2, 0) {$\ffunctor{D}\alpha_1'$};
      \node[tip] (end0) at (0, -4) {$\ffunctor{D}\alpha_1$};
      \node[tip] (end1) at (1, -4) {$\ffunctor{D}\ffunctor{F}v$};
      \node[tip] (end2) at (2, -4) {$\ffunctor{D}\ffunctor{F}v'$};
      \node[naturaltr] (dmu0) at (1.5, -1) {$\ffunctor{D}\mu'$};
      \node[naturaltr] (dmu1) at (0.5, -2) {$\ffunctor{D}\mu$};
      \node[naturaltr] (zeta) at (1.5, -3) {$\ffunctor{D}\ffunctor{F}\zeta$};
      \draw (sta0) to[out=270, in=90] (dmu0.north east);
      \draw (dmu0.south west) to[out=270, in=90] (dmu1.north east);
      \draw (dmu0.south east) to[out=270, in=90] (zeta.north east);
      \draw (dmu1.south west) to[out=270, in=90] (end0);
      \draw (dmu1.south east) to[out=270, in=90] (zeta.north west);
      \draw (zeta.south west) to[out=270, in=90] (end1);
      \draw (zeta.south east) to[out=270, in=90] (end2);
    \end{tikzpicture} \xspacedequal{}
    \begin{tikzpicture}[baseline=(current bounding box.center)]
      \node[tip] (sta0) at (2, 0) {$\ffunctor{D}\alpha_1'$};
      \node[tip] (end0) at (0, -4) {$\ffunctor{D}\alpha_1$};
      \node[tip] (end1) at (1, -4) {$\ffunctor{D}\ffunctor{F}v$};
      \node[tip] (end2) at (2, -4) {$\ffunctor{D}\ffunctor{F}v'$};
      \node[naturaltr] (dnu0) at (1.5, -1) {$\ffunctor{D}\nu'$};
      \node[naturaltr] (dnu1) at (0.5, -2) {$\ffunctor{D}\nu$};
      \draw (sta0) to[out=270, in=90] (dnu0.north east);
      \draw (dnu0.south west) to[out=270, in=90] (dnu1.north east);
      \draw (dnu0.south east) to[out=270, in=90] (end2);
      \draw (dnu1.south west) to[out=270, in=90] (end0);
      \draw (dnu1.south east) to[out=270, in=90] (end1);
    \end{tikzpicture}
  \end{equation}
  Similarly, the pseudonaturality of $\phi$ gives the relation:
  \begin{equation} \label{PsNatPhiZeta}
    \begin{tikzpicture}[baseline=(current bounding box.center)]
      \node[tip] (sta0) at (0, 0) {$\ffunctor{D}\ffunctor{F}u$};
      \node[tip] (sta1) at (1, 0) {$\ffunctor{D}\ffunctor{F}u'$};
      \node[tip] (sta2) at (2, 0) {$\phi_{a_1'}$};
      \node[tip] (end0) at (0, -4) {$\phi_{a_1}$};
      \node[naturaltr] (u') at (1.5, -1) {$\phi_{u'}$};
      \node[naturaltr] (u) at (0.5, -2) {$\phi_{u}$};
      \draw (sta0) to[out=270, in=90] (u.north west);
      \draw (sta1) to[out=270, in=90] (u'.north west);
      \draw (sta2) to[out=270, in=90] (u'.north east);
      \draw (u'.south west) to[out=270, in=90] (u.north east);
      \draw (u.south west) to[out=270, in=90] (end0);
    \end{tikzpicture} =
    \begin{tikzpicture}[baseline=(current bounding box.center)]
      \node[tip] (sta0) at (0, 0) {$\ffunctor{D}\ffunctor{F}u$};
      \node[tip] (sta1) at (1, 0) {$\ffunctor{D}\ffunctor{F}u'$};
      \node[tip] (sta2) at (2, 0) {$\phi_{a_1'}$};
      \node[tip] (end0) at (0, -4) {$\phi_{a_1}$};
      \node[naturaltr] (zeta) at (0.5, -1) {$\ffunctor{D}\ffunctor{F}\zeta$};
      \node[naturaltr] (v') at (1.5, -2) {$\phi_{v'}$};
      \node[naturaltr] (v) at (0.5, -3) {$\phi_{v}$};
      \draw (sta0) to[out=270, in=90] (zeta.north west);
      \draw (sta1) to[out=270, in=90] (zeta.north east);
      \draw (sta2) to[out=270, in=90] (v'.north east);
      \draw (zeta.south west) to[out=270, in=90] (v.north west);
      \draw (zeta.south east) to[out=270, in=90] (v'.north west);
      \draw (v'.south west) to[out=270, in=90] (v.north east);
      \draw (v.south west) to[out=270, in=90] (end0);
    \end{tikzpicture}
  \end{equation}
  We can now compute $j(p)$:
  \begin{align*}
    j(p) &=
           \begin{tikzpicture}[baseline=(current bounding box.center)]
             \node[tip] (sta0) at (1, 0) {$\ffunctor{D}\alpha_0$};
             \node[tip] (sta1) at (2, 0) {$\phi_{a_0}$};
             \node[tip] (end0) at (1, -5) {$\ffunctor{D}\alpha_2$};
             \node[tip] (end1) at (2, -5) {$\phi_{a_2}$};
             \node[naturaltr] (dmu) at (0.5, -1) {$\ffunctor{D}\mu$};
             \node[naturaltr] (u) at (1.5, -2) {$\phi_{u}$};
             \node[naturaltr] (v) at (1.5, -3) {$\phi_{v}^{-1}$};
             \node[naturaltr] (dnu) at (0.5, -4) {$\ffunctor{D}\nu^{-1}$};
             \draw (sta0) to[out=270, in=90] (dmu.north east);
             \draw (sta1) to[out=270, in=90] (u.north east);
             \draw (dmu.south west) to[out=270, in=90] (dnu.north west);
             \draw (dmu.south east) to[out=270, in=90] (u.north west);
             \draw (u.south west) to[out=270, in=90] (v.north west);
             \draw (u.south east) to[out=270, in=90] (v.north east);
             \draw (v.south west) to[out=270, in=90] (dnu.north east);
             \draw (v.south east) to[out=270, in=90] (end1);
             \draw (dnu.south east) to[out=270, in=90] (end0);
           \end{tikzpicture} \xspacedequal{\eqref{PsNatPhiZeta}}
           \begin{tikzpicture}[baseline=(current bounding box.center)]
             \node[tip] (sta0) at (1, 0) {};
             \node[tip] (sta1) at (2, 0) {};
             \node[tip] (end0) at (1, -6) {};
             \node[tip] (end1) at (2, -6) {};
             \node[naturaltr] (dmu) at (0.5, -1) {$\ffunctor{D}\mu$};
             \node[naturaltr] (u) at (2.5, -2) {$\phi_{u'}^{-1}$};
             \node[naturaltr] (zeta) at (1.5, -3) {$\ffunctor{D}\ffunctor{F}\zeta$};
             \node[naturaltr] (v) at (2.5, -4) {$\phi_{v'}$};
             \node[naturaltr] (dnu) at (0.5, -5) {$\ffunctor{D}\nu^{-1}$};
             \draw (sta0) to[out=270, in=90] (dmu.north east);
             \draw (sta1) to[out=270, in=90] (u.north west);
             \draw (dmu.south west) to[out=270, in=90] (dnu.north west);
             \draw (dmu.south east) to[out=270, in=90] (zeta.north west);
             \draw (u.south west) to[out=270, in=90] (zeta.north east);
             \draw (u.south east) to[out=270, in=90] (v.north east);
             \draw (zeta.south west) to[out=270, in=90] (dnu.north east);
             \draw (zeta.south east) to[out=270, in=90] (v.north west);
             \draw (v.south west) to[out=270, in=90] (end1);
             \draw (dnu.south east) to[out=270, in=90] (end0);
           \end{tikzpicture} \\
         &\spacedequal{}
           \begin{tikzpicture}[baseline=(current bounding box.center)]
             \node[tip] (sta0) at (1, 0) {};
             \node[tip] (sta1) at (2, 0) {};
             \node[tip] (end0) at (1, -6) {};
             \node[tip] (end1) at (2, -6) {};
             \node[naturaltr] (u) at (2.5, -1) {$\phi_{u'}^{-1}$};
             \node[naturaltr] (dmu) at (0.5, -2) {$\ffunctor{D}\mu$};
             \node[naturaltr] (zeta) at (1.5, -3) {$\ffunctor{D}\ffunctor{F}\zeta$};
             \node[naturaltr] (dnu) at (0.5, -4) {$\ffunctor{D}\nu^{-1}$};
             \node[naturaltr] (v) at (2.5, -5) {$\phi_{v'}$};
             \draw (sta0) to[out=270, in=90] (dmu.north east);
             \draw (sta1) to[out=270, in=90] (u.north west);
             \draw (u.south west) to[out=270, in=90] (zeta.north east);
             \draw (u.south east) to[out=270, in=90] (v.north east);
             \draw (dmu.south west) to[out=270, in=90] (dnu.north west);
             \draw (dmu.south east) to[out=270, in=90] (zeta.north west);
             \draw (zeta.south west) to[out=270, in=90] (dnu.north east);
             \draw (zeta.south east) to[out=270, in=90] (v.north west);
             \draw (dnu.south east) to[out=270, in=90] (end0);
             \draw (v.south west) to[out=270, in=90] (end1);
           \end{tikzpicture} \xspacedequal{\eqref{ZetaMorphSlice}}
           \begin{tikzpicture}[baseline=(current bounding box.center)]
             \node[tip] (sta0) at (1, 0) {$\ffunctor{D}\alpha_0$};
             \node[tip] (sta1) at (2, 0) {$\phi_{a_0}$};
             \node[tip] (end0) at (1, -5) {$\ffunctor{D}\alpha_2$};
             \node[tip] (end1) at (2, -5) {$\phi_{a_2}$};
             \node[naturaltr] (u) at (2.5, -1) {$\phi_{u'}^{-1}$};
             \node[naturaltr] (dmu) at (1.5, -2) {$\ffunctor{D}\mu'^{-1}$};
             \node[naturaltr] (dnu) at (1.5, -3) {$\ffunctor{D}\nu'$};
             \node[naturaltr] (v) at (2.5, -4) {$\phi_{v'}$};
             \draw (sta0) to[out=270, in=90] (dmu.north west);
             \draw (sta1) to[out=270, in=90] (u.north west);
             \draw (u.south west) to[out=270, in=90] (dmu.north east);
             \draw (u.south east) to[out=270, in=90] (v.north east);
             \draw (dmu.south east) to[out=270, in=90] (dnu.north east);
             \draw (dnu.south west) to[out=270, in=90] (end0);
             \draw (dnu.south east) to[out=270, in=90] (v.north west);
             \draw (v.south west) to[out=270, in=90] (end1);
           \end{tikzpicture} \spacedequal{} j(p')
  \end{align*}

  We now show that, for two homotopic paths $p \sim p'$, we have $j(p) = j(p')$. It suffices to show
  that the relation $\mathcal{R}$ on paths defined by
  \[
    p \mathcal{R} p' \Longleftrightarrow j(p) = j(p')
  \]
  is reflexive, symmetric, transitive and congruent, since we have already
  proved that it contains $\sim_{\text{elem}}$. The three first
  properties are obviously satisfied. The last one is a direct consequence of
  the compatibility of $j$ with the concatenation of paths: $j(p \cdot p') =
  j(p')j(p)$.

  Since $b / \ffunctor{F}$ is simply connected by hypothesis and $j$ is homotopy invariant, the $2$-isomorphism $j(p)$
  only depends on the source and the target of $p$. Hence for any two objects
  $(a, \alpha)$ and $(a', \alpha')$ in $b / \ffunctor{F}$, there is a unique
  $2$-isomorphism $\psi_{(a,
    \alpha)} \Rightarrow \psi_{(a', \alpha')}$ in $\catname{E}$ induced by a path in $b/
  \ffunctor{F}$.
\end{proof}

\begin{definition}
  Given a morphism $u \colon b \to b'$ in \catname{B}, there is a
  base change functor:
  \[
    u^* \colon \left\{
      \begin{array}{lll}
        b' / \ffunctor{F} & \to & b / \ffunctor{F} \\
        (x, \chi) & \mapsto & (x, \chi \circ u) \\
        (v, \nu)  & \mapsto & (v, \nu \cdot u)
      \end{array}
    \right.
  \]
  Note that this functor also extends to a function between the respective sets of
  paths.
\end{definition}

\begin{proposition}
  The application $j$ maps base change to whiskering:
  \[
    j(u^*p) = j(p) \cdot \ffunctor{D}u
  \]
\end{proposition}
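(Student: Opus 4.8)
The plan is to reduce the identity to single-morphism paths and then read it off from the pasting diagram~\eqref{DiagDefJ} using that $\ffunctor{D}$ is a \emph{strict} $2$-functor. First I record how base change acts on the components of \cref{defComp}. For an object $(x, \chi) \in b' / \ffunctor{F}$ the base-changed object is $(x, \chi u) \in b / \ffunctor{F}$, and strictness of $\ffunctor{D}$ gives
\[
  \psi_{(x, \chi u)} = \phi_x \circ \ffunctor{D}(\chi u) = \phi_x \circ \ffunctor{D}(\chi) \circ \ffunctor{D}(u) = \psi_{(x, \chi)} \circ \ffunctor{D}u .
\]
In particular, for a path $p \colon (x,\chi) \leadsto (x',\chi')$ in $b'/\ffunctor{F}$, the base-changed path $u^*p$ runs $(x,\chi u)\leadsto(x',\chi' u)$, so $j(u^*p)$ is a $2$-isomorphism $\psi_{(x,\chi)}\circ\ffunctor{D}u \Rightarrow \psi_{(x',\chi')}\circ\ffunctor{D}u$, which is exactly the source and target of the right-whiskering $j(p)\cdot\ffunctor{D}u$. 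Thus the asserted equation is well-typed.

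Next I reduce to the case where $p$ is a single signed morphism. Both operations are compatible with concatenation: $u^*$ acts termwise on the sequence of morphisms of a path, whence $u^*(p\cdot q)=(u^*p)\cdot(u^*q)$, while $j(p\cdot q)=j(q)\,j(p)$ was established in the proof of \cref{propJInv}. Moreover right-whiskering by the fixed $1$-morphism $\ffunctor{D}u$ is a functor on hom-categories, so it preserves vertical composition: $\bigl(j(q)\,j(p)\bigr)\cdot\ffunctor{D}u = (j(q)\cdot\ffunctor{D}u)(j(p)\cdot\ffunctor{D}u)$. Consequently, if the identity holds for each elementary one-step path, it propagates to all concatenations, hence to every path.

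It remains to treat a single step, say a positive one $(x,\chi)\xrightarrow{(v,\nu)}(x',\chi')$ with $\nu\colon\chi'\Rightarrow\ffunctor{F}(v)\chi$. Specializing \eqref{DiagDefJ} to this one-step path exhibits $j(p)$ as the vertical composite
\[
  \phi_x\circ\ffunctor{D}(\chi)\;\xRightarrow{\;\phi_v\,\cdot\,\ffunctor{D}(\chi)\;}\;\phi_{x'}\circ\ffunctor{D}\ffunctor{F}(v)\circ\ffunctor{D}(\chi)\;\xRightarrow{\;\phi_{x'}\,\cdot\,\ffunctor{D}(\nu)^{-1}\;}\;\phi_{x'}\circ\ffunctor{D}(\chi').
\]
The base-changed step is $(x,\chi u)\xrightarrow{(v,\,\nu\cdot u)}(x',\chi' u)$, so $j(u^*p)$ is the same composite with $\chi,\chi'$ replaced by $\chi u,\chi' u$ and $\nu$ replaced by $\nu\cdot u$. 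Since $\ffunctor{D}$ is strict it preserves whiskering, $\ffunctor{D}(\nu\cdot u)=\ffunctor{D}(\nu)\cdot\ffunctor{D}(u)$, and composition of $1$-morphisms, $\ffunctor{D}(\chi u)=\ffunctor{D}(\chi)\circ\ffunctor{D}(u)$; each factor of $j(u^*p)$ is therefore the corresponding factor of $j(p)$ right-whiskered by $\ffunctor{D}u$, and by the interchange law the whole composite equals $j(p)\cdot\ffunctor{D}u$. The negative step is identical, using $\phi_v^{-1}$ and $\ffunctor{D}(\nu)$ in place of $\ffunctor{D}(\nu)^{-1}$.

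The computation itself is routine; the only point requiring care — and the conceptual crux — is that base change modifies solely the $b$-side data $\chi$ (and the $2$-cells $\nu$), while leaving the $\catname{A}$-morphisms $v$ and hence the pseudonaturality cells $\phi_v$ of $\phi$ entirely unchanged. Once this is observed, strictness of $\ffunctor{D}$ guarantees that the single right factor $\ffunctor{D}u$ passes through every constituent of the pasting diagram, which is precisely the content of the whiskering formula $j(u^*p)=j(p)\cdot\ffunctor{D}u$.
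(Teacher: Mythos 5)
Your proof is correct; the paper in fact states this proposition without any proof, and your argument (reduction to length-one steps via compatibility of $j$, $u^*$ and whiskering with concatenation, then strictness of $\ffunctor{D}$ applied to $\ffunctor{D}(\chi u)$ and $\ffunctor{D}(\nu \cdot u)$, noting that $\phi_v$ is untouched by base change) is exactly the routine verification the author leaves implicit. The only discrepancy is notational: in the paper's pasting diagram \eqref{DiagDefJ} the pseudonaturality cell is oriented as $\phi_v \colon \phi_{x'} \circ \ffunctor{D}\ffunctor{F}(v) \Rightarrow \phi_x$, so the forward step uses $\phi_v^{-1}$ rather than $\phi_v$; this does not affect the argument.
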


We can now use the above properties to construct a cone $\psi$ under $\ffunctor{D}$ with
vertex~$e$. For any $b \in \catname{B}$, fix an arbitrary object $(a(b),
\alpha(b))$ in $b / \ffunctor{F}$. This defines the components $\psi_b =
\psi_{(a(b), \alpha(b))}$, as stated in \cref{defComp}. For a morphism $u \colon b \to b'$, note that
$\psi_{(a(b'), \alpha(b')} \circ \ffunctor{D}u =
\psi_{u^*(a(b'),\alpha(b'))}$; hence we can define $\psi_u$ as the unique
2-isomorphism $j(p)$ induced by any path $p \colon u^*(a(b'), \alpha(b')) \leadsto (a(b),
\alpha(b))$. We must check that $\psi$ is indeed a pseudonatural
transformation. The compatibility of $j$ with the whiskering and the
concatenation of paths implies the required compatibility of $\psi$ with the
composition of morphisms.
It remains to check the compatibility with 2-morphisms. Let $u, u' \colon b
\to b'$ be two parallel 1-morphisms and $\delta \colon u \Rightarrow u'$ be a
2-morphism in \catname{B}. By unicity of
2-morphisms induced by a path (\cref{propJInv}), it suffices to check that the pasting
\begin{equation}\label{diagComp2Morph}
  \begin{tikzcd}
    \ffunctor{D}b' \arrow[rrd, bend left] & & \\
    & & e \\
    \ffunctor{D}b \arrow[uu, bend left=2cm, "\ffunctor{D}u"{name=n1, description}] \arrow[uu,
    bend right=2cm, "\ffunctor{D}u'"{name=n2, description}] \arrow[from=n1, to=n2, Rightarrow,
    shorten=0.1cm, "\ffunctor{D}\delta"] \arrow[from=n2, to=2-3, Rightarrow,
    shorten=0.1cm, "\psi_{u'}"] \arrow[rru, bend right]
  \end{tikzcd}
\end{equation}
is induced by a path. Indeed, fix a path $p \colon (u')^*(a(b'), \alpha(b'))
\leadsto (a(b), \alpha(b))$ and recall that, by definition, $\psi_{u'} =
j(p)$.
We consider the path $p'$ of length one:
\[
  p' \spacedequal
  (a(b'), u'\alpha(b')) \xleftarrow{(\id, \alpha(b')\delta)} (a(b'),
  u'\alpha(b')) \spacedequal
  \begin{tikzcd}[baseline=(current bounding box.center), sep=0.25cm]
    a(b') \arrow[rr, equal] & & a(b') \\
    b' \arrow[u] \arrow[rr, equal] & & b' \arrow[u] \\
    & b \arrow[lu, bend left, "u"{name=n1}, pos=0.9] \arrow[ru, bend right,
    "u'"{name=n2}, swap, pos=0.9]
    \arrow[from=n1, to=n2, Rightarrow, "\delta", shorten=0.4cm]
  \end{tikzcd}
\]
The above pasting \eqref{diagComp2Morph} is then induced by the concatenation $p' \cdot p$ of $p'$ and $p$.

Through similar arguments, we can see that any other choice of the objects
$(a(b),\alpha(b))_{b \in \catname{B}}$ leads to an isomorphic cone.

From now on, we assume that the objects $(a(b), \alpha(b))_{b \in \catname{B}}$ are fixed and
we write $\functor{L}(\phi)$ for the cone $\psi$ under $\ffunctor{D}$ induced
from the cone $\phi$ under $\ffunctor{D} \circ \ffunctor{F}$. Since we will not
work with a single fixed cone $\phi$ anymore, we should write $j_{\phi}$ instead
of $j$.

We would like to extend this mapping $\phi \mapsto \functor{L}{\phi}$ to a
functor
\[
  \functor{L} \colon [\catname{A},\catname{E}](\ffunctor{D} \circ \ffunctor{F}, \Delta{e}) \to
  [\catname{B}, \catname{E}](\ffunctor{D}, \Delta{e}).
\]
We use the proposition:
\begin{proposition}\label{propModifJ}
  Let $m \colon \phi \to \phi'$ be a modification between two cones
  \[
    \phi, \phi' \colon \ffunctor{D} \circ \ffunctor{F} \Rightarrow \Delta{e}
  \]
  For any path $p \colon (a, \alpha) \leadsto (a', \alpha')$ in $b /
  \ffunctor{F}$, we have the following equality:
  \begin{equation}\label{eqModifJ}
    \begin{tikzcd}[column sep=0.35cm, row sep=1cm]
      e \arrow[rr, equal] \arrow[from=d, bend left, "\phi_a", pos=0.45] & &
      e \arrow[from=d, bend left, "\phi_{a'}"{name=n1}, pos=0.45]
      \arrow[from=d, bend right, "\phi'_{a'}"{name=n2}, pos=0.45, swap]
      \arrow[from=n1, to=n2, "m_{a'}", Rightarrow, shorten=0.2cm] \\
      \ffunctor{D}\ffunctor{F}(a) \arrow[from=rd] \arrow[rr, Rightarrow,
      "j_{\phi}(p)", shorten=0.5cm] & &
      \ffunctor{D}\ffunctor{F}(a') \arrow[from=ld] \\
      & \ffunctor{D}(b)
    \end{tikzcd} \spacedequal
    \begin{tikzcd}[column sep=0.35cm, row sep=1cm]
      e \arrow[rr, equal] \arrow[from=d, bend left, "\phi_a"{name=n1}, pos=0.45]
      \arrow[from=d, bend right, "\phi'_a"{name=n2}, pos=0.45, swap]
      \arrow[from=n1, to=n2, "m_a", Rightarrow, shorten=0.2cm] & &
      e \arrow[from=d, bend right, "\phi_{a'}", pos=0.45]\\
      \ffunctor{D}\ffunctor{F}(a) \arrow[from=rd] \arrow[rr, Rightarrow,
      "j_{\phi'}(p)", shorten=0.5cm] & &
      \ffunctor{D}\ffunctor{F}(a') \arrow[from=ld] \\
      & \ffunctor{D}(b)
    \end{tikzcd}
  \end{equation}
\end{proposition}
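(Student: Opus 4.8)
The plan is to prove \eqref{eqModifJ} by induction on the length of the path $p$, the base case being precisely the defining axiom of the modification $m$.

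First I would check that \eqref{eqModifJ} is compatible with concatenation. Suppose $p = p_1 \cdot p_2$ factors through an intermediate object $(a'', \alpha'')$ and that \eqref{eqModifJ} is already known for $p_1 \colon (a,\alpha) \leadsto (a'',\alpha'')$ and for $p_2 \colon (a'',\alpha'') \leadsto (a',\alpha')$. Recalling from the proof of \cref{propJInv} that $j_\phi(p) = j_\phi(p_2)\, j_\phi(p_1)$, and likewise for $\phi'$, I can slide the modification $2$-cell across $p$ in two steps: the equation for $p_2$ replaces $m_{a'}$ sitting after $j_\phi(p_2)$ by $m_{a''}$ sitting before $j_{\phi'}(p_2)$, and the equation for $p_1$ then replaces $m_{a''}$ sitting after $j_\phi(p_1)$ by $m_a$ sitting before $j_{\phi'}(p_1)$. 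The two applications of the interchange law needed to reorganise the pasting are routine. This reduces the claim to paths of length zero and one.

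For the empty path there is nothing to prove, since both sides of \eqref{eqModifJ} are $m_a$ whiskered with $\ffunctor{D}\alpha$. So suppose $p$ is a single forward step, carrying a slice $2$-isomorphism $\mu$. By \eqref{DiagDefJ}, $j_\phi(p)$ is then the pasting of $\ffunctor{D}\mu$ with the pseudonaturality $2$-cell $\phi_u$, and $j_{\phi'}(p)$ is the analogous pasting using $\phi'_u$. The cell $\ffunctor{D}\mu$ lives on the $\ffunctor{D}(b)$-side of the diagram and involves none of the cone components, so it commutes past the modification cells by interchange; after cancelling it from both sides of \eqref{eqModifJ}, what remains is exactly the modification axiom for $m$ at the morphism $u$, relating $\phi_u$, $\phi'_u$ and the components $m_a$, $m_{a'}$. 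A single backward step is handled in the same way, with $\phi_u$ and $\phi'_u$ replaced by their inverses; the required identity is then obtained by inverting the modification square for $u$.

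The only genuinely delicate point is this length-one case, where one must align the whiskering directions appearing in \eqref{DiagDefJ} with those in the modification axiom, and correctly invert the relevant $2$-cell for a backward step. As for \cref{propJInv}, this bookkeeping is most cleanly carried out in the string-diagram calculus; once the orientations are fixed there is no conceptual obstruction, the content of the proposition being entirely the naturality of the cone construction in the cone variable.
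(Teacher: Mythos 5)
Your proposal is correct and follows essentially the same route as the paper: induction on the length of $p$, with the empty path being a tautology, the length-one case (in either direction) reducing via interchange to the modification axiom for $m$ at the underlying morphism $u$, and the general case following from the compatibility of $j_{\phi}$ and $j_{\phi'}$ with concatenation. The paper simply presents the base cases before the concatenation step, but the content is the same.
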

\begin{proof}
  For an empty path $p$, the proposition reduce to the tautology $m_a = m_a$.
  For a path $p = (a, \alpha) \xrightarrow{(u, \mu)} (a',  \alpha')$ of length 1,
  we can decompose the equation as:
  \[
    \begin{tikzcd}[column sep=0.35cm, row sep=1cm]
      e \arrow[rr, equal] \arrow[from=d, bend left, "\phi_a"{name=n0}, pos=0.45,
      swap] & &
      e \arrow[from=d, bend left, "\phi_{a'}"{name=n1}, pos=0.45]
      \arrow[from=d, bend right, "\phi'_{a'}"{name=n2}, pos=0.45, swap]
      \arrow[from=n0, to=n1, "\phi_{u}^{-1}", Rightarrow, shorten=0.2cm]
      \arrow[from=n1, to=n2, "m_{a'}", Rightarrow, shorten=0.2cm] \\
      \ffunctor{D}\ffunctor{F}(a) \arrow[from=rd, ""{name=n3}]
      \arrow[rr, "\ffunctor{D}\ffunctor{F}u"] & &
      \ffunctor{D}\ffunctor{F}(a') \arrow[from=ld, ""{name=n4}, swap]
      \arrow[from=n3, to=n4, Rightarrow, "\ffunctor{D}\mu^{-1}", shorten=0.2cm] \\
      & \ffunctor{D}(b)
    \end{tikzcd} \spacedequal
    \begin{tikzcd}[column sep=0.35cm, row sep=1cm]
      e \arrow[rr, equal] \arrow[from=d, bend left, "\phi_a"{name=n1}, pos=0.45]
      \arrow[from=d, bend right, "\phi'_a"{name=n2}, pos=0.45, swap]
      \arrow[from=n1, to=n2, "m_a", Rightarrow, shorten=0.2cm] & &
      e \arrow[from=d, bend right, "\phi_{a'}"{name=n0}, pos=0.45]
      \arrow[from=n2, to=n0, "\phi_u^{'-1}", Rightarrow, shorten=0.3cm] \\
      \ffunctor{D}\ffunctor{F}(a) \arrow[from=rd, ""{name=n3}]
      \arrow[rr, "\ffunctor{D}\ffunctor{F}u"] & &
      \ffunctor{D}\ffunctor{F}(a') \arrow[from=ld, ""{name=n4}, swap]
      \arrow[from=n3, to=n4, Rightarrow, "\ffunctor{D}\mu^{-1}", shorten=0.2cm]\\
      & \ffunctor{D}(b)
    \end{tikzcd}
  \]
  The lower parts of these diagrams are the same and the upper parts are equal,
  by the property of the modification $m$. Hence \cref{eqModifJ} holds for a
  path $p = ((a, \alpha) \xrightarrow{(u, \mu)} (a',  \alpha'))$. A similar
  decomposition of the diagrams shows that it also holds for a path $p = ((a,
  \alpha) \xleftarrow{(u, \mu)} (a',  \alpha'))$ of length one in the reverse direction.

  Since $j_\phi$ and $j_{\phi'}$ are compatible with paths concatenation, if
  \cref{eqModifJ} holds for two composable paths $p$ and $p'$, it also holds for
  their concatenation $pp'$. We can thus conclude that it holds for any path $p$, as
  the path $p$ is generated by paths of length 1.
\end{proof}
This property directly implies that the components
\[
  \functor{L}(m)_b \: = \: \begin{tikzcd}[column sep=1.2cm]
   \ffunctor{D}(b) \arrow[r] & \ffunctor{D}(a(b))
    \arrow[r, bend left, ""{name=n1}, pos=0.4, swap] \arrow[r, bend right,
    ""{name=n2}, pos=0.4]
    & e
    \arrow[from=n1, to=n2, "m_{a(b)}", Rightarrow]
  \end{tikzcd}
\]
define a 2-morphism $\functor{L}(\phi) \to \functor{L}(\phi')$. The
functoriality of $\functor{L}$ is straightforward.

Now consider the canonical functor
\[
  \functor{K} \colon \left\{
    \begin{array}{lll}
      [\catname{B}, \catname{E}](\ffunctor{D}, \Delta{e}) & \to
      & [\catname{A}, \catname{E}](\ffunctor{D} \circ \ffunctor{F}, \Delta{e}) \\
      \psi_{\bullet} & \mapsto & \psi_{\ffunctor{F}(\bullet)} \\
      m_{\bullet} & \mapsto & m_{\ffunctor{F}(\bullet)}
    \end{array}
  \right.
\]
sending cones under \ffunctor{D} with vertex $e$ to cones under $\ffunctor{D}
\circ \ffunctor{F}$ with vertex $e$. We are now ready to show that
$\functor{L}$ and $\functor{K}$ are mutual pseudo-inverses.

\begin{proposition}\label{propKLId}
  There is a natural isomorphism $\eta \colon \id \Rightarrow \functor{K}\functor{L}$.
\end{proposition}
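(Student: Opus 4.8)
The plan is to build the natural isomorphism $\eta$ componentwise, exploiting the fact that for each object $a \in \catname{A}$ the pair $(a, \id_{\ffunctor{F}a})$ is a \emph{canonical} object of the slice $\ffunctor{F}(a)/\ffunctor{F}$, and that by \cref{defComp} together with the strictness of $\ffunctor{D}$ one has $\psi_{(a,\id_{\ffunctor{F}a})} = \phi_a \circ \ffunctor{D}(\id_{\ffunctor{F}a}) = \phi_a$. On the other hand, unwinding the definitions gives $\functor{K}\functor{L}(\phi)_a = \functor{L}(\phi)_{\ffunctor{F}a} = \psi_{(a(\ffunctor{F}a),\,\alpha(\ffunctor{F}a))}$, the component at the \emph{chosen} object of $\ffunctor{F}(a)/\ffunctor{F}$. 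Since this slice is nonempty, connected and simply connected, I would pick a path $p_a \colon (a, \id_{\ffunctor{F}a}) \leadsto (a(\ffunctor{F}a), \alpha(\ffunctor{F}a))$ and set $\eta_{\phi,a} \coloneqq j_{\phi}(p_a) \colon \phi_a \to \psi_{(a(\ffunctor{F}a),\,\alpha(\ffunctor{F}a))}$; by \cref{propJInv} this does not depend on the chosen path. Each $\eta_{\phi,a}$ is a $2$-isomorphism, so invertibility of the eventual modification will be automatic.

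The main work, and the main obstacle, is to verify that the family $(\eta_{\phi,a})_a$ satisfies the modification axiom, which for $w \colon a \to a'$ in $\catname{A}$ reads $\eta_{\phi,a}\circ\phi_w = \psi_{\ffunctor{F}w}\circ(\eta_{\phi,a'}\cdot\ffunctor{D}\ffunctor{F}w)$, where $\phi_w$ and $\psi_{\ffunctor{F}w}$ are the structure $2$-cells of the two cones. The strategy is to realize all four $2$-cells as values of $j_{\phi}$ on explicit paths living in the \emph{single} slice $\ffunctor{F}(a)/\ffunctor{F}$, and then appeal to homotopy invariance. Concretely, the morphism $(w, \id)\colon (a, \id_{\ffunctor{F}a}) \to (a', \ffunctor{F}w)$ of $\ffunctor{F}(a)/\ffunctor{F}$ provides a length-one path realizing $\phi_w$; base change along $\ffunctor{F}w$ carries the path $p_{a'}$ of $\ffunctor{F}(a')/\ffunctor{F}$ to a path $(\ffunctor{F}w)^{*}p_{a'}$ of $\ffunctor{F}(a)/\ffunctor{F}$ with $j_{\phi}\bigl((\ffunctor{F}w)^{*}p_{a'}\bigr) = \eta_{\phi,a'}\cdot\ffunctor{D}\ffunctor{F}w$ by the base-change proposition $j(u^{*}p)=j(p)\cdot\ffunctor{D}u$; and $\psi_{\ffunctor{F}w}$ is, by definition, $j_{\phi}$ of a path $q$ starting at $(\ffunctor{F}w)^{*}(a(\ffunctor{F}a'),\alpha(\ffunctor{F}a'))$. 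Using $j_{\phi}(p\cdot p')=j_{\phi}(p')j_{\phi}(p)$, each side of the axiom becomes $j_{\phi}$ of a concatenated path from $(a', \ffunctor{F}w)$ to $(a(\ffunctor{F}a), \alpha(\ffunctor{F}a))$; as these two concatenations share their source and target and $\ffunctor{F}(a)/\ffunctor{F}$ is simply connected, \cref{propJInv} forces them to agree. The delicate points are keeping the orientations of the structure $2$-cells and of the zig-zags consistent, and checking that the base-changed object $(\ffunctor{F}w)^{*}(a', \id_{\ffunctor{F}a'})$ is indeed $(a', \ffunctor{F}w)$ so that the paths compose as claimed.

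For naturality of $\eta$ in the cone, I would take a modification $m \colon \phi \to \phi'$ and check, at each $a$, the square $\functor{K}\functor{L}(m)_a \circ \eta_{\phi,a} = \eta_{\phi',a}\circ m_a$. Because the \emph{same} path $p_a$ computes both $\eta_{\phi,a} = j_{\phi}(p_a)$ and $\eta_{\phi',a} = j_{\phi'}(p_a)$, this is exactly \cref{propModifJ} applied to $p_a$ in $\ffunctor{F}(a)/\ffunctor{F}$: its left-hand side whiskers $m_{a(\ffunctor{F}a)}$ with $\ffunctor{D}\alpha(\ffunctor{F}a)$, which is precisely the component $\functor{K}\functor{L}(m)_a$, while its right-hand side yields $m_a$ (the whiskering by $\ffunctor{D}(\id_{\ffunctor{F}a})$ being trivial). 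Since every $\eta_{\phi,a}$ is a $2$-isomorphism, the modifications $\eta_{\phi}$ are invertible and $\eta$ is a natural isomorphism, as required.
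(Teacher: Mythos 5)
Your proposal is correct and follows essentially the same route as the paper's proof: defining $\eta_{\phi,a}$ as $j_\phi$ of a path $(a,\id_{\ffunctor{F}a})\leadsto(a(\ffunctor{F}a),\alpha(\ffunctor{F}a))$, verifying the modification axiom by realizing all four $2$-cells as $j_\phi$ of paths in the single slice $\ffunctor{F}(a)/\ffunctor{F}$ via base change and then invoking simple connectivity through \cref{propJInv}, and deducing naturality from \cref{propModifJ}. The only caveat is the bookkeeping of orientations in the modification axiom, which you explicitly flag and which is harmless since all $2$-cells involved are invertible.
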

\begin{proof}
  Let $\phi \in  [\catname{A}, \catname{E}](\ffunctor{D} \circ \ffunctor{F},
  \Delta{e})$ be a cone under $\ffunctor{D} \circ \ffunctor{F}$ with vertex $e$.
  We will write $j = j_{\phi}$.

  We want to define the component $\eta_{\phi} \colon \phi \to
  \functor{K}\functor{L}\phi$ of $\eta$ at $\phi$. Since $\eta_{\phi}$ must be
  a modification, we have to define its components at each object $a_0 \in \catname{A}$:
  \[
    \eta_{\phi, a_0} \colon \phi_{a_0} \Rightarrow \phi_{a(\ffunctor{F}a_0)} \circ \ffunctor{D}\alpha{\ffunctor{F}a_0}.
  \]
  Both $(a_0, \id_{\ffunctor{F}a_0})$ and $(a(\ffunctor{F}a_0),
  \alpha(\ffunctor{F}a_0))$ are objects of $\ffunctor{F}a_0 / \ffunctor{F}$, which
  is connected. Hence, there is a path in $\ffunctor{F}a_0 / \ffunctor{F}$:
  \[
    p \colon (a_0, \id_{\ffunctor{F}a_0}) \leadsto (a(\ffunctor{F}a_0),\alpha(\ffunctor{F}a_0))
  \]

  We have to check that $\eta_{\phi}$ is a modification. That is, for any
  morphism $f \colon a_0 \to a_1$ in $\catname{A}$, we have to check the
  commutativity of:
  \[
    \begin{tikzcd}[column sep=2cm]
      \phi_a
        \arrow[r, "\eta_{\phi, a_0}", Rightarrow]
        \arrow[d, "\phi_f", Rightarrow]
      & \phi_{a(\ffunctor{F}a_0)} \circ \ffunctor{D}\alpha(\ffunctor{F}a_0)
        \arrow[d, "\functor{L}(\phi)_{\ffunctor{F}f}", Rightarrow] \\
      \phi_{a_1} \circ \ffunctor{D}\ffunctor{F}f
        \arrow[r, "\eta_{\phi, a_1} \cdot \ffunctor{D}\ffunctor{F}f", Rightarrow]
      & \phi_{a(\ffunctor{F}a_1)} \circ \ffunctor{D}\alpha(\ffunctor{F}a_1)
      \circ \ffunctor{D}\ffunctor{F}f
    \end{tikzcd}
  \]
  We first remark that there is a path $p_0 = ((a_0, \id) \xrightarrow{(f, \id)}
  (a_1, \ffunctor{F}f))$ in $\ffunctor{F}a_0 / \ffunctor{F}$ and the induced 2-isomorphism is $\phi_f = j(p_0)$.
  Moreover, expanding the definitions, we have
  \begin{align*}
    &\eta_{\phi, a_0} = j(p_1) && \text{for some } p_1 \colon (a_0, \id) \leadsto (a(\ffunctor{F}a_0),\alpha(\ffunctor{F}a_0)) \\
    &\eta_{\phi, a_1} = j(p_2) && \text{for some } p_2 \colon (a_1, \id) \leadsto (a(\ffunctor{F}a_1),\alpha(\ffunctor{F}a_1)) \\
    & \functor{L}(\phi)_{\ffunctor{F}f} = j(p_3) && \text{for some } p_3 \colon (a(\ffunctor{F}a_0), \alpha(\ffunctor{F}a_0)) \leadsto \ffunctor{F}(f)^*(a(\ffunctor{F}a_1), \alpha(\ffunctor{F}a_1))
  \end{align*}
  where $p_1$ and $p_3$ are paths in $\ffunctor{F}a_0 / \ffunctor{F}$, and $p_2$
  is a path in $\ffunctor{F}a_1 / \ffunctor{F}$.
  We can check that $p_1 \cdot p_3$ and $p_0 \cdot \ffunctor{F}(f)^*p_2$
  are paths
  \[
    (a_0, \id) \leadsto \ffunctor{F}(f)^*(a(\ffunctor{F}a_1),\alpha(\ffunctor{F}a_1)).
  \]
  Hence
  \begin{align*}
    (\eta_{\phi, a_1} \cdot \ffunctor{D}\ffunctor{F}f) \circ \phi_f
    &= j(\ffunctor{F}(f)^*p_2) \circ j(p_0) \\
    &= j(p_0 \cdot \ffunctor{F}(f)^*p_2) \\
    &= j(p_1 \cdot p_3) \\
    &= j(p_3) \circ j(p_1) \\
    &= \functor{L}(\phi)_{\ffunctor{F}f} \circ \eta_{\phi, a_0} \\
  \end{align*}

  We also have to check the naturality of $\eta$. For any modification $m \colon
  \phi \to \phi'$, we want the commutativity of the square:
  \[
    \begin{tikzcd}
      \phi \arrow[r, "\eta_{\phi}"] \arrow[d, "m"] &
      \functor{K}\functor{L}\phi \arrow[d, "\functor{K}\functor{L}m"] \\
      \phi' \arrow[r, "\eta_{\phi'}"] &
      \functor{K}\functor{L}\phi'
    \end{tikzcd}
  \]
  That is, for any object $a_0 \in \catname{A}$:
   \[
    \begin{tikzcd}
      \phi_{a_0} \arrow[r, "j_{\phi}(p)"] \arrow[d, "m_{a_0}"] &
      \phi_{a(\ffunctor{F}a_0)} \circ \ffunctor{D}\alpha(\ffunctor{F}a_0) \arrow[d, "m_{\ffunctor{F}a_0}"] \\
      \phi'_{a_0} \arrow[r, "j_{\phi'}(p)"] &
      \functor{K}\functor{L}\phi'_{a_0}
    \end{tikzcd}
  \]
  where $p \colon (a_0, \id) \leadsto (a(\ffunctor{F}a_0), \alpha(\ffunctor{F}a_0))$
  is a path in $\ffunctor{F}a_0 / \ffunctor{F}$. This last square commutes by \cref{propModifJ}.
\end{proof}

In the reverse direction we show:
\begin{proposition}\label{propLKId}
  There is a natural isomorphism $\epsilon : \functor{L}\functor{K} \Rightarrow \id$.
\end{proposition}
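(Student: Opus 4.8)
The plan is to exhibit, for each cone $\psi \colon \ffunctor{D} \Rightarrow \Delta e$, a modification $\epsilon_\psi \colon \functor{L}\functor{K}(\psi) \to \psi$ whose components are read off directly from the pseudonaturality of $\psi$, and then to check naturality in $\psi$. Recall that $\functor{K}(\psi) = \psi \circ \ffunctor{F}$ has components $\functor{K}(\psi)_a = \psi_{\ffunctor{F}a}$, so that $\functor{L}\functor{K}(\psi)_b = \psi_{\ffunctor{F}(a(b))} \circ \ffunctor{D}(\alpha(b))$ by \cref{defComp}. Since $\alpha(b) \colon b \to \ffunctor{F}(a(b))$ is a $1$-morphism of $\catname{B}$, the pseudonaturality $2$-isomorphism $\psi_{\alpha(b)} \colon \psi_{\ffunctor{F}(a(b))} \circ \ffunctor{D}(\alpha(b)) \Rightarrow \psi_b$ is already a $2$-isomorphism $\functor{L}\functor{K}(\psi)_b \Rightarrow \psi_b$, and I would set $\epsilon_{\psi, b} := \psi_{\alpha(b)}$.

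The heart of the proof is a lemma expressing $j_{\functor{K}(\psi)}$ in terms of $\psi$: for any path $p \colon (a, \alpha) \leadsto (a', \alpha')$ in $b / \ffunctor{F}$,
\[
  j_{\functor{K}(\psi)}(p) \spacedequal \psi_{\alpha'}^{-1} \circ \psi_{\alpha}.
\]
Because $j$ is compatible with concatenation and the right-hand side telescopes, both sides are multiplicative under concatenation, so it suffices to check the equality on the empty path (where it is the identity) and on the length-one generators. For a generator $(a, \alpha) \xrightarrow{(v, \nu)} (a', \alpha')$, the $2$-isomorphism $j_{\functor{K}(\psi)}(p)$ is built from $\functor{K}(\psi)_v = \psi_{\ffunctor{F}v}$ and $\ffunctor{D}(\nu)$ as in \eqref{DiagDefJ}; the identity then follows from the axiom relating $\psi$ to the $2$-morphism $\nu \colon \alpha' \Rightarrow \ffunctor{F}(v) \circ \alpha$ together with the composition axiom of $\psi$ for $\ffunctor{F}(v) \circ \alpha$, using that $\ffunctor{D}$ is strict. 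The reverse generator is handled symmetrically. I expect this coherence bookkeeping, best carried out with string diagrams as in \cref{propJInv}, to be the main obstacle.

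Granting the lemma, the modification axiom for $\epsilon_\psi$ is a short calculation. For $u \colon b \to b'$ we have $\functor{L}\functor{K}(\psi)_u = j_{\functor{K}(\psi)}(p)$ for a path $p \colon u^*(a(b'), \alpha(b')) \leadsto (a(b), \alpha(b))$, so the lemma gives $\functor{L}\functor{K}(\psi)_u = \psi_{\alpha(b)}^{-1} \circ \psi_{\alpha(b') \circ u}$. The axiom to verify is $\epsilon_{\psi, b} \circ \functor{L}\functor{K}(\psi)_u = \psi_u \circ (\epsilon_{\psi, b'} \cdot \ffunctor{D}u)$; substituting $\epsilon_{\psi, b} = \psi_{\alpha(b)}$ and cancelling, it reduces to $\psi_{\alpha(b') \circ u} = \psi_u \circ (\psi_{\alpha(b')} \cdot \ffunctor{D}u)$, which is precisely the composition axiom of $\psi$ for the composable pair $b \xrightarrow{u} b' \xrightarrow{\alpha(b')} \ffunctor{F}(a(b'))$.

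Finally, naturality of $\epsilon$ in $\psi$ is immediate: for a modification $m \colon \psi \to \psi'$ one has $\functor{L}\functor{K}(m)_b = m_{\ffunctor{F}(a(b))} \cdot \ffunctor{D}(\alpha(b))$, so the naturality square at $b$ reads $\psi'_{\alpha(b)} \circ \bigl(m_{\ffunctor{F}(a(b))} \cdot \ffunctor{D}(\alpha(b))\bigr) = m_b \circ \psi_{\alpha(b)}$, which is exactly the modification axiom for $m$ at the $1$-morphism $\alpha(b) \colon b \to \ffunctor{F}(a(b))$. Since every component $\epsilon_{\psi, b} = \psi_{\alpha(b)}$ is a $2$-isomorphism, $\epsilon$ is a natural isomorphism, as required. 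Together with \cref{propKLId}, this shows that $\functor{K}$ and $\functor{L}$ are mutually pseudo-inverse, completing the backward implication.
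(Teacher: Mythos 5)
Your proposal is correct and follows essentially the same route as the paper: the same components $\epsilon_{\psi,b}=\psi_{\alpha(b)}$, the same key lemma identifying $j_{\functor{K}(\psi)}(p)$ with $\psi_{\alpha'}^{-1}\circ\psi_{\alpha}$ (the paper states it as a pasting identity, proves it for length-one morphisms and extends along concatenation, exactly as you do), the same reduction of the modification axiom to the composition constraint of $\psi$ at $b\xrightarrow{u}b'\xrightarrow{\alpha(b')}\ffunctor{F}(a(b'))$, and the same naturality check via the modification axiom for $m$ at $\alpha(b)$. No substantive differences.
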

\begin{proof}
  Fix a cone $\psi \colon \ffunctor{D} \Rightarrow \Delta{e}$ under $\ffunctor{D}$. Write $\psi' =
  \functor{L}\functor{K}(\psi)$. For any $b \in \catname{B}$, we have:
  \begin{align*}
    \psi'_b &= \functor{K}(\psi)_{a(b)} \circ \ffunctor{D}(\alpha(b))
            = \psi_{\ffunctor{F}(a(b))} \circ \ffunctor{D}(\alpha(b))
  \end{align*}
  Hence we can define a 2-morphism $\epsilon_{\psi,b} \colon \psi'_b \Rightarrow \psi_b$ in $\catname{E}$ by:
  \begin{align*}
    \epsilon_{\psi, b} = \psi_{\alpha(b)}
  \end{align*}
  When $b$ ranges over all objects of $\catname{B}$, these morphisms then form a modification $\epsilon_{\psi} \colon \psi' \to \psi$.
  Indeed for any morphism $(u, \mu) \colon (a, \alpha) \to (a', \alpha')$ in $b
  / \ffunctor{F}$, we have:
  \[
    \begin{tikzcd}[row sep=1.5cm, column sep=1.5cm]
      & e \\
      \ffunctor{D}\ffunctor{F}(a') \arrow[ru, bend left] \arrow[ru, Rightarrow,
      shorten=0.2cm, swap, "\psi_{\ffunctor{F}(u)}"] & \ffunctor{D}\ffunctor{F}(a) \arrow[l] \arrow[u] \\
      & \ffunctor{D}(b) \arrow[lu, bend left, ""{name=n2}, swap] \arrow[u] \arrow[uu, bend right=3cm,
      ""{name=n1}]
      \arrow[from=2-2, to=n1, Rightarrow, "\psi_{\alpha}"]
      \arrow[from=n2, to=2-2, Rightarrow, "\ffunctor{D}\mu"]
    \end{tikzcd} \spacedequal
    \begin{tikzcd}[row sep=1.5cm, column sep=1.5cm]
      & e \\
      \ffunctor{D}\ffunctor{F}(a') \arrow[ru, bend left] & \\
      & \ffunctor{D}(b) \arrow[lu, bend left] \arrow[uu, ""{name=n1}]
      \arrow[from=2-1, to=n1, Rightarrow, "\psi_{\alpha'}"]
    \end{tikzcd}
  \]
  which implies a similar formula for any path $p \colon (a', \alpha') \leadsto (a, \alpha)$ in $b / \ffunctor{F}$:
  \[
    \begin{tikzcd}[row sep=1.5cm, column sep=1.5cm]
      & e \\
      \ffunctor{D}\ffunctor{F}(a') \arrow[ru, bend left] \arrow[r, Rightarrow, "j_{\functor{K}(\psi)}(p)"] & \ffunctor{D}\ffunctor{F}(a) \arrow[u] \\
      & \ffunctor{D}(b) \arrow[lu, bend left] \arrow[u] \arrow[uu, bend right=3cm,
      ""{name=n1}]
      \arrow[from=2-2, to=n1, Rightarrow, "\psi_{\alpha}"]
    \end{tikzcd} \spacedequal
    \begin{tikzcd}[row sep=1.5cm, column sep=1.5cm]
      & e \\
      \ffunctor{D}\ffunctor{F}(a') \arrow[ru, bend left] & \\
      & \ffunctor{D}(b) \arrow[lu, bend left] \arrow[uu, ""{name=n1}]
      \arrow[from=2-1, to=n1, Rightarrow, "\psi_{\alpha'}"]
    \end{tikzcd}
  \]
  This in turn implies that $\epsilon_{\psi}$ is a modification. Fix a morphism
  $u \colon b \to b'$ and consider a path $p \colon u^*(a(b'),\alpha(b'))
  \leadsto (a(b), \alpha(b))$ (hence we have $\psi'_u =
  j_{\functor{K}(\psi)}(p)$). We check the modification axiom at $u$:
  \begin{align*}
    &\begin{tikzcd}[ampersand replacement=\&, row sep=1.2cm]
      e \arrow[r, equal] \& e \\
      \ffunctor{D}\ffunctor{F}(a(b')) \arrow[u] \arrow[r, Rightarrow, "\psi'_u"] \& \ffunctor{D}\ffunctor{F}(a(b)) \arrow[u] \\
      \ffunctor{D}(b') \arrow[u] \& \ffunctor{D}(b) \arrow[l] \arrow[u] \arrow[uu, bend
      right=3cm, ""{name=n1}] \arrow[from=2-2, to=n1, Rightarrow,
      "\epsilon_{\psi, b}"]
    \end{tikzcd} \spacedequal
    \begin{tikzcd}[ampersand replacement=\&, row sep=1.2cm]
      \& e \\
      \ffunctor{D}\ffunctor{F}(a(b')) \arrow[ru, bend left] \arrow[r, Rightarrow, "j_{\functor{K}(\psi)}(p)"] \& \ffunctor{D}\ffunctor{F}(a(b)) \arrow[u] \\
      \& \ffunctor{D}(b) \arrow[lu, bend left] \arrow[u] \arrow[uu, bend
      right=3cm, ""{name=n1}] \arrow[from=2-2, to=n1, Rightarrow,
      "\psi_{\alpha(b)}"]
    \end{tikzcd} \\ &=
                      \begin{tikzcd}[ampersand replacement=\&]
                        \& e \\
                        \ffunctor{D}\ffunctor{F}(a(b')) \arrow[ru, bend left] \& \\
                        \& \ffunctor{D}(b) \arrow[lu, bend left] \arrow[uu, ""{name=n1}]
                        \arrow[from=2-1, to=n1, Rightarrow, "\psi_{\alpha(b')u}"]
                      \end{tikzcd} =
    \begin{tikzcd}[ampersand replacement=\&]
      \& e \arrow[r, equal] \& e\\
      \ffunctor{D}\ffunctor{F}(a(b')) \arrow[ru, bend left] \& \& \\
      \& \ffunctor{D}(b') \arrow[lu, bend left] \arrow[uu, ""{name=n1}] \arrow[ruu, Rightarrow, shorten=0.1cm, "\psi_u"]
      \& \ffunctor{D}(b) \arrow[l] \arrow[uu]
      \arrow[from=2-1, to=n1, Rightarrow, "\epsilon_{\psi, b'}"]
    \end{tikzcd}
  \end{align*}
  Finally we have to check the naturality of $\epsilon \colon \functor{L}\functor{K} \to
  \id$, that is, for any modification of cones $m \colon \psi \to \psi'$, the
  commutativity of the square:
  \[
    \begin{tikzcd}
      \functor{L}\functor{K}\psi \arrow[r, "\epsilon_{\psi}"] \arrow[d,
      "\functor{L}\functor{K}m"] & \psi \arrow[d, "m"] \\
      \functor{L}\functor{K}\psi' \arrow[r, "\epsilon_{\psi'}"] & \psi'
    \end{tikzcd}
  \]
  Indeed, for any object $b \in \catname{B}$:
  \[
    \epsilon_{\psi', b} \circ (\functor{L}\functor{K}m)_b = \psi'_{\alpha(b)}
    \circ m_{a(b)}\alpha(b) = m_{b} \circ \psi_{\alpha(b)}.
  \]
\end{proof}

Putting together \cref{propKLId} and \cref{propLKId} we deduce:
\begin{proposition}
  The canonical functor
  \[
    \functor{K} \colon [\catname{B}, \catname{E}](\ffunctor{D}, \Delta{e}) \to [\catname{A},
    \catname{E}](\ffunctor{D} \circ \ffunctor{F}, \Delta{e})
  \]
  is an equivalence.
\end{proposition}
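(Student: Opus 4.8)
The plan is to read off the equivalence directly from the two preceding propositions, which between them contain all the substantive work. Recall that a functor between (ordinary) categories is an equivalence precisely when it admits a pseudo-inverse: a functor in the reverse direction whose two composites with the original functor are naturally isomorphic to the respective identity functors. The hom-categories $[\catname{B}, \catname{E}](\ffunctor{D}, \Delta{e})$ and $[\catname{A}, \catname{E}](\ffunctor{D} \circ \ffunctor{F}, \Delta{e})$ are ordinary $1$-categories, so this is exactly the criterion we must meet.

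I would propose the functor $\functor{L} \colon [\catname{A},\catname{E}](\ffunctor{D} \circ \ffunctor{F}, \Delta{e}) \to [\catname{B}, \catname{E}](\ffunctor{D}, \Delta{e})$ constructed above as the pseudo-inverse of $\functor{K}$. \cref{propKLId} supplies a natural isomorphism $\eta \colon \id \Rightarrow \functor{K}\functor{L}$ on $[\catname{A},\catname{E}](\ffunctor{D} \circ \ffunctor{F}, \Delta{e})$, while \cref{propLKId} supplies a natural isomorphism $\epsilon \colon \functor{L}\functor{K} \Rightarrow \id$ on $[\catname{B}, \catname{E}](\ffunctor{D}, \Delta{e})$. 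Together these exhibit $\functor{L}$ and $\functor{K}$ as mutual pseudo-inverses, whence $\functor{K}$ is an equivalence.

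No genuine obstacle remains at this stage: every computation of substance—the homotopy invariance of $j$ under the simple connectedness hypothesis (\cref{propJInv}), the construction of the components $\functor{L}(\phi)_b$ and the verification that they assemble into a pseudonatural cone, the functoriality of $\functor{L}$, and the modification axioms underlying $\eta$ and $\epsilon$—has already been discharged in the results leading up to here. The present statement is therefore a purely formal consequence, amounting only to invoking the characterization of an equivalence by the existence of a pseudo-inverse; the only thing worth double-checking is that the source and target categories of $\eta$ and $\epsilon$ match up so that $\functor{L}$ and $\functor{K}$ do compose as claimed, which they do.
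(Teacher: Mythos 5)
Your proposal is correct and is exactly the paper's own argument: the paper deduces this proposition by ``putting together'' \cref{propKLId} and \cref{propLKId}, i.e.\ by observing that $\eta \colon \id \Rightarrow \functor{K}\functor{L}$ and $\epsilon \colon \functor{L}\functor{K} \Rightarrow \id$ exhibit $\functor{L}$ as a pseudo-inverse of $\functor{K}$. Your identification of the source and target categories of $\eta$ and $\epsilon$ is also accurate, so nothing is missing.
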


Since this is true for any object $e$ of $\catname{E}$, clearly
$\ccolim \ffunctor{D}$ exists if and only if $\ccolim \ffunctor{D} \circ
\ffunctor{F}$ exists and, if it is the case, they are canonically equivalent.

We have thus proved one implication of \cref{thmFinal}:
\begin{proposition}
  Let $\ffunctor{F} \colon \catname{A} \to \catname{B}$ be a $(2,1)$-functor.
  If for any object $b \in \catname{B}$, the slice $(2,1)$-category $b /
  \ffunctor{F}$ is nonempty, connected and simply connected, then the $(2,
  1)$-functor $\ffunctor{F}$ is $2$-final.
\end{proposition}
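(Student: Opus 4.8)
The plan is to verify \cref{def2Final} directly, assembling the constructions carried out above. Fix an arbitrary 2-diagram $\ffunctor{D} \colon \catname{B} \to \catname{E}$ and an arbitrary vertex $e \in \catname{E}$. The preceding development produces a functor $\functor{L}$ together with the natural isomorphisms of \cref{propKLId} and \cref{propLKId}, namely $\id \Rightarrow \functor{K}\functor{L}$ and $\functor{L}\functor{K} \Rightarrow \id$; hence the restriction functor
\[
  \functor{K} \colon [\catname{B}, \catname{E}](\ffunctor{D}, \Delta{e}) \to [\catname{A}, \catname{E}](\ffunctor{D} \circ \ffunctor{F}, \Delta{e})
\]
is an equivalence of categories with pseudo-inverse $\functor{L}$. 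This is exactly where the three hypotheses on the slices are used: nonemptiness makes \cref{defComp} available, connectedness lets \cref{defJ} compare any two choices of object, and simple connectedness is what forces the 2-isomorphism $j(p)$ to depend only on the endpoints of $p$ (\cref{propJInv}), providing the coherence that makes $\functor{L}(\phi)$ a genuine cone.

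Next I would observe that $\functor{K}$ is simply precomposition with $\ffunctor{F}$, so it is pseudonatural in the vertex $e$, and that the comparison 1-morphism $\ccolim_{\catname{A}}{\ffunctor{D} \circ \ffunctor{F}} \to \ccolim_{\catname{B}}{\ffunctor{D}}$ of \cref{def2Final} is precisely the morphism corresponding to the family $(\functor{K})_{e}$ under the defining equivalences $\Psi_{e}$ of the two bicolimits. A representability argument then finishes the proof: since $\functor{K}$ is an equivalence pseudonatural in $e$, the pseudofunctor $e \mapsto [\catname{B}, \catname{E}](\ffunctor{D}, \Delta{e})$ is representable if and only if $e \mapsto [\catname{A}, \catname{E}](\ffunctor{D} \circ \ffunctor{F}, \Delta{e})$ is, that is, $\ccolim_{\catname{B}}{\ffunctor{D}}$ exists if and only if $\ccolim_{\catname{A}}{\ffunctor{D} \circ \ffunctor{F}}$ exists; and when both exist, the bicategorical Yoneda lemma upgrades the equivalence $\functor{K}$ into an equivalence of the representing objects, which is the comparison 1-morphism. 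As $\ffunctor{D}$ and $e$ were arbitrary, every clause of \cref{def2Final} holds and $\ffunctor{F}$ is 2-final.

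I expect the only genuinely delicate point to be this last passage, from ``$\functor{K}$ is an equivalence for each $e$, pseudonaturally'' to ``the canonical comparison 1-morphism of bicolimits is an equivalence, and existence transfers''. All the combinatorics of paths and homotopies has already been absorbed into \cref{propJInv}, \cref{propKLId} and \cref{propLKId}; what remains is the standard but fiddly bicategorical Yoneda bookkeeping, where one must track the coherence 2-cells of the pseudonatural $\Psi_{e}$ and verify that the 1-morphism induced between the two representing objects really is the canonical comparison and not merely some equivalence between them.
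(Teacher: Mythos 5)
Your proposal is correct and follows essentially the same route as the paper: it assembles \cref{propKLId} and \cref{propLKId} to conclude that the restriction functor $\functor{K}$ is an equivalence for every vertex $e$, and then passes from this pseudonatural family of equivalences to the existence and equivalence of the bicolimits. The only difference is that you spell out the final representability/Yoneda step, which the paper dismisses with ``clearly''; your identification of where each of the three hypotheses on the slices is used also matches the paper's outline.
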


The reverse implication is proved by observing the following fact:
\begin{proposition}\label{propPiColimit}
  Let $\ffunctor{F} \colon \catname{A} \to \catname{B}$ be a $(2, 1)$-functor.
  Then
  \[
    \Pi_1(b / \ffunctor{F}) \cong \ccolim_{a \in \catname{A}}\catname{B}(b , \ffunctor{F}a).
  \]
\end{proposition}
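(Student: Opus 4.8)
The plan is to recognise the slice as a Grothendieck construction and to reduce the statement to the standard computation of bicolimits of groupoid-valued pseudofunctors. Write $\functor{G} \colon \catname{A} \to \catgpd$ for the $(2,1)$-functor $a \mapsto \catname{B}(b, \ffunctor{F}a)$, acting on a $1$-morphism $u \colon a \to a'$ by postcomposition $\ffunctor{F}(u) \circ (-)$ and on a $2$-morphism by whiskering; this is exactly the diagram whose bicolimit appears on the right, and it lands in groupoids because $\catname{B}$ is a $(2,1)$-category. First I would observe that $b / \ffunctor{F}$ is isomorphic to the Grothendieck construction $\int_{\catname{A}} \functor{G}$: an object $(i, f)$ of the slice is precisely a pair of an object $i$ of $\catname{A}$ and an object $f$ of $\functor{G}(i)$, a $1$-morphism $(u, \mu)$ is a morphism $u$ of $\catname{A}$ together with a $2$-isomorphism $\mu \colon f' \to \functor{G}(u)(f)$, and the $2$-cells match on the nose. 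The direction of $\mu$ is opposite to one of the usual conventions, but this is harmless since every $2$-cell in sight is invertible.

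Next I would invoke the description of the pseudo bicolimit of a $\catcat$-valued pseudofunctor as a localisation of its Grothendieck construction: $\ccolim_{\catname{A}} \functor{G}$ is obtained from $\int_{\catname{A}} \functor{G}$ by inverting the opcartesian $1$-morphisms, namely those of the form $(u, \id)$. Concretely, a cone $\functor{G} \Rightarrow \Delta{T}$ is the same datum as a functor $\int_{\catname{A}} \functor{G} \to T$ sending opcartesian morphisms to isomorphisms, and this correspondence is pseudonatural in $T$; since the diagram is valued in groupoids, the bicolimit may be computed in $\catgpd$, so $T$ ranges over groupoids.

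The key simplification is that the fibres $\functor{G}(a) = \catname{B}(b, \ffunctor{F}a)$ are groupoids. A morphism $(u, \theta)$ of $\int_{\catname{A}} \functor{G}$ is opcartesian precisely when its fibre component $\theta$ is invertible, which is automatic here; hence every morphism is opcartesian, and inverting the opcartesian morphisms inverts all morphisms. For a $(2,1)$-category the localisation at all morphisms is exactly the algebraic fundamental groupoid $\Pi_1$ defined above: its generators are the $1$-morphisms (a backward path being inverse to the corresponding forward one, as noted after \ref{ElemHtpyFace}) and its relations are composition, identities, and the identification of $1$-morphisms linked by a $2$-isomorphism. This identifies $\ccolim_{\catname{A}} \functor{G}$ with $\Pi_1\bigl(\int_{\catname{A}} \functor{G}\bigr) = \Pi_1(b / \ffunctor{F})$.

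The main obstacle is the second step: matching the universal property of the bicolimit, that is a pseudonatural equivalence between the categories of cones and the functor categories out of the localisation, and in particular checking that the coherence $2$-cells of a cone (the comparison isomorphisms along $\functor{G}(u)$) correspond exactly to the path-induced isomorphisms used to define $\Pi_1$, compatibly as $T$ varies. Once the identification $b / \ffunctor{F} \cong \int_{\catname{A}} \functor{G}$ is in place this is the standard coherence bookkeeping for $\catcat$-valued pseudocolimits, and the groupoidal hypothesis makes the opcartesian-inversion condition vacuous, so no genuinely new argument is required beyond careful translation of conventions.
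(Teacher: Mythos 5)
Your argument is correct, but it takes a genuinely different route from the paper. The paper proves \cref{propPiColimit} head-on: for each $1$-category $T$ it writes down an explicit functor $C_T \colon [\catname{A},\catcat](\catname{B}(b,\ffunctor{F}-),\Delta{T}) \to [\Pi_1(b/\ffunctor{F}),T]$ (a cone $\psi$ goes to the functor $(a,\alpha)\mapsto\psi_a(\alpha)$, $(u,\mu)\mapsto \psi_{a'}(\mu)\circ(\psi_u)_\alpha$, extended to zig-zags), checks invariance under the elementary homotopies, and verifies full faithfulness and essential surjectivity by exhibiting a preimage of any functor out of $\Pi_1(b/\ffunctor{F})$. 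You instead factor the statement through two general principles: the slice is the ($2$-categorical) Grothendieck construction of $a\mapsto\catname{B}(b,\ffunctor{F}a)$, and the pseudo bicolimit of a $\catcat$-valued diagram is the localisation of its Grothendieck construction at the opcartesian morphisms --- which here are all morphisms because the fibres are groupoids. This is more conceptual and isolates the real reason a fundamental groupoid appears. Two caveats keep it from being a free reduction. First, the localisation description is standard for diagrams indexed by a $1$-category, but here $\catname{A}$ is a $(2,1)$-category, so you need the version for $2$-category-indexed diagrams (with the $2$-cells of $\int_{\catname{A}}\functor{G}$ collapsed on the way to a $1$-category $T$); unwinding the cone-to-functor correspondence in that generality, pseudonaturally in $T$, is essentially the computation $C_T$ that the paper carries out. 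Second, you must identify the bicategorical localisation of a $(2,1)$-category at all $1$-morphisms with the combinatorially defined $\Pi_1$, i.e.\ check that the relation generated by \ref{ElemHtpyId1}--\ref{ElemHtpyFace} presents exactly the congruence obtained by inverting every $1$-morphism and equating $2$-isomorphic ones; you gesture at this correctly, but it is part of the content rather than a formality. So the proposal is sound provided you either cite the localisation formula in the required generality or unwind it, at which point the two proofs essentially converge.
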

\begin{proof}
 The wanted equivalence can be proved by constructing a family of equivalences,
 pseudonatural in the category $T$:
 \[
   C_{T} \colon [\catname{A}, \catcat](\catname{B}(b, \ffunctor{F}-), \Delta{T})
   \cong [\Pi_1(b / \ffunctor{F}), T].
 \]
 Fix $\psi \colon \catname{B}(b, \ffunctor{F}-) \Rightarrow \Delta{T}$ a
 pseudonatural transformation. We want to define a functor $C_{T}(\psi) \colon
 \Pi_1(b / \ffunctor{F}) \to T$.

 For any object $(a, \alpha \colon b \to \ffunctor{F}(a))$ of $\Pi_1(b /
 \ffunctor{F})$, set:
 \[
   C_{T}(\psi)(a, \alpha) = \psi_a(\alpha)
 \]

 For any morphism $(u, \mu \colon u\alpha \Rightarrow \alpha') \colon (a,
 \alpha) \to (a', \alpha')$ of $b /
 \ffunctor{F}$, define the composite isomorphism:
 \[
   C_{T}(\psi)(u, \mu) :
   \begin{tikzcd}
    \psi_a(\alpha) \arrow[r, "(\psi_u)_{\alpha}"] & \psi_{a'}(u \circ \alpha)
    \arrow[r, "\psi_{a'}(\mu)"] & \psi_{a'}(\alpha')
   \end{tikzcd}
 \]
 This can be extended to paths using the relations:
 \begin{align*}
   C_{T}(\psi)((a', \alpha') \xleftarrow{(u, \mu)} (a, \alpha)) &= C_{T}(\psi)((a, \alpha) \xrightarrow{(u, \mu)} (a', \alpha'))^{-1} \\
   C_{T}(\psi)((a, \alpha)) &= \id_{\psi_a(\alpha)} \\
   C_{T}(\psi)(p \cdot p') &= C_{T}(\psi)(p') \circ C_{T}(\psi)(p)
 \end{align*}
 On can check that such a definition is homotopy invariant, and gives a
 well-defined functor $C_{T}(\psi) \colon \Pi_1(b / \ffunctor{F}) \to T$.

 For a modification $m \colon \psi \to \psi'$, we define a natural transformation
 \[
   C_{T}(m) \colon C_{T}(\psi) \Rightarrow C_{T}(\psi')
 \]
 with components:
 \begin{equation}\label{defCModif}
   C_{T}(m)_{(a, \alpha)} = (m_a)_{\alpha}
 \end{equation}

 To show that $C_T$ is an equivalence, we show that it is a fully faithful and
 essentially surjective functor.

 Indeed it is clear that \eqref{defCModif} defines a bijection
 between modifications $\psi \to \psi'$ and natural transformations $C_{T}(\psi)
 \Rightarrow C_{T}(\psi')$. Hence $C_T$ is fully faithful.

 Moreover, given any functor $F \colon \Pi_1(b / \ffunctor{F}) \to T$, one can
 define a pseudonatural transformation $\psi \colon \catname{B}(b,
 \ffunctor{F}-) \to \Delta{T}$ by:
 \[
  \psi_a : \left\{
    \begin{array}{lll}
      \catname{B}(b, \ffunctor{F}a) & \to & T \\
      \alpha & \mapsto & F(a, \alpha) \\
      \nu & \mapsto & F(\id_a, \nu)
    \end{array}
  \right.
 \]
 \[
  (\psi_u)_{\alpha} \colon
      F(a, \alpha) \xrightarrow{F(u, \id)} F(a', u \circ \alpha)
 \]
 for any object $a$ and morphism $u \colon a \to a'$ of $\catname{A}$. It is
 straightforward to check:
 \[
   F = C_T(\psi)
 \]
\end{proof}

We can now prove:
\begin{proposition}\label{propFinalPiTriv}
  Let $\ffunctor{F} \colon \catname{A} \to \catname{B}$ be a $2$-final $(2,
  1)$-functor. Then, for any object $b$ in $\catname{B}$:
  \[
    \Pi_1(b / \ffunctor{F}) \cong 1
  \]
\end{proposition}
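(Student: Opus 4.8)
The plan is to apply the $2$-finality of $\ffunctor{F}$ to a cleverly chosen $2$-diagram, and then read off the conclusion from \cref{propPiColimit}. The key observation is that \cref{propPiColimit} identifies $\Pi_1(b/\ffunctor{F})$ with a bicolimit $\ccolim_{a \in \catname{A}} \catname{B}(b, \ffunctor{F}a)$, and that this bicolimit is precisely of the form $\ccolim_{\catname{A}}(\ffunctor{D} \circ \ffunctor{F})$ for the representable $2$-diagram $\ffunctor{D} = \catname{B}(b, -) \colon \catname{B} \to \catcat$. So the strategy is to exploit the hypothesis of $2$-finality on this particular $\ffunctor{D}$.

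First I would fix an object $b \in \catname{B}$ and consider the $2$-functor $\ffunctor{D} = \catname{B}(b, -) \colon \catname{B} \to \catcat$ sending an object $b'$ to the hom-category $\catname{B}(b, b')$. Then $\ffunctor{D} \circ \ffunctor{F}$ is exactly $\catname{B}(b, \ffunctor{F}-)$, so \cref{propPiColimit} gives $\Pi_1(b/\ffunctor{F}) \cong \ccolim_{\catname{A}}(\ffunctor{D} \circ \ffunctor{F})$. Next I would observe that $\ccolim_{\catname{B}} \ffunctor{D} \cong 1$: the bicolimit of a representable $2$-functor is the terminal category, which is the $2$-categorical analogue of the classical fact that the colimit of a representable presheaf is a point (the Yoneda/co-Yoneda argument, or a direct verification that cones under $\catname{B}(b, -)$ with vertex $T$ correspond to objects of $T$). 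Since $\ffunctor{F}$ is $2$-final, the comparison morphism $\ccolim_{\catname{A}}(\ffunctor{D} \circ \ffunctor{F}) \to \ccolim_{\catname{B}} \ffunctor{D}$ is an equivalence, so chaining the equivalences yields $\Pi_1(b/\ffunctor{F}) \cong \ccolim_{\catname{B}} \ffunctor{D} \cong 1$.

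\emph{The main obstacle} I expect is justifying $\ccolim_{\catname{B}} \catname{B}(b, -) \cong 1$ rigorously at the bicategorical level. Unlike the $1$-categorical situation, one must argue with pseudonatural transformations and modifications rather than strict cocones, so the cleanest route is to exhibit, for each category $T$, an equivalence $[\catname{B}, \catcat](\catname{B}(b, -), \Delta T) \cong T$ that is pseudonatural in $T$; the $2$-categorical (co)Yoneda lemma supplies this, sending a cone $\psi$ to $\psi_b(\id_b)$. This is the step requiring the most care, but it is a standard application of the bicategorical Yoneda lemma and does not depend on any special features of $\ffunctor{F}$.

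I should note one technical caveat: \cref{def2Final} requires the target $2$-category $\catname{E}$ of the test diagram to be a genuine $2$-category, and here I take $\catname{E} = \catcat$, which is only a $(2,2)$-category. However, by the remark following \cref{def2Final}, since $\catname{B}$ is a $(2,1)$-category the relevant pseudo bicolimits may be computed in $\catcat_g$, the $(2,1)$-category of categories, functors and natural \emph{isomorphisms}; this is exactly the setting of \cref{propPiColimit}, whose cones land in $[\Pi_1(b/\ffunctor{F}), T]$ with $T$ a category and morphisms the invertible natural transformations. Thus the finality hypothesis applies to $\ffunctor{D}$ as required, and the argument goes through.
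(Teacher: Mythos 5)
Your proof is correct and follows essentially the same route as the paper: identify $\Pi_1(b/\ffunctor{F})$ with $\ccolim_{a \in \catname{A}}\catname{B}(b,\ffunctor{F}a)$ via \cref{propPiColimit}, apply $2$-finality to the representable diagram $\catname{B}(b,-)$, and conclude $\ccolim_{b' \in \catname{B}}\catname{B}(b,b') \cong 1$ by the $2$-categorical Yoneda lemma. The additional remark about computing the bicolimit in the underlying $(2,1)$-category of \catcat{} is a sensible clarification but does not change the argument.
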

\begin{proof}
 We have a chain of equivalences:
 \[
    \Pi_1(b / \ffunctor{F})
    \overset{\ref{propPiColimit}}{\cong} \ccolim_{a \in
      \catname{A}}\catname{B}(b, \ffunctor{F}a)
    \overset{(1)}{\cong} \ccolim_{b' \in
      \catname{B}}\catname{B}(b, b')
    \overset{(2)}{\cong} 1
  \]
  The equivalence $(1)$ is an application of the $2$-finality of $\ffunctor{F}$.
  The equivalence $(2)$ is a consequence of the Yoneda lemma for $2$-categories.
  Indeed we have the chain of equivalences, for any $1$-category $T$, and
  pseudonatural in $T$:
  \[
    [\catname{B}, \catcat](\catname{B}(b, -), \Delta{T}) \cong \Delta{T}(b)
    \cong T \cong \catcat(1, T)
  \]
\end{proof}

By combining \cref{propFinalPiTriv} and \cref{rmkPiConn}, we have:
\begin{proposition}
  Let $\ffunctor{F} \colon \catname{A} \to \catname{B}$ be a $2$-final $(2,
  1)$-functor. Then, for any object $b$ in $\catname{B}$, the $(2, 1)$-category
  $b / \ffunctor{F}$ is nonempty, connected and simply connected.
\end{proposition}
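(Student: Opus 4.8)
The plan is to read off the statement as a direct combination of the two facts just established, with no further construction required. First I would invoke \cref{propFinalPiTriv}: since $\ffunctor{F}$ is assumed $2$-final, for every object $b$ of $\catname{B}$ one obtains an equivalence of groupoids $\Pi_1(b/\ffunctor{F}) \cong 1$. Then I would apply the characterization of \cref{rmkPiConn}, which asserts that a $(2,1)$-category is nonempty, connected and simply connected exactly when its fundamental groupoid is equivalent to the terminal category $1$. Taking $\catname{C} = b/\ffunctor{F}$ in that remark immediately yields the three asserted properties, and this is the whole proof.

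It is worth emphasizing where the actual content sits, since the present proposition is only the final packaging. The substance is carried by \cref{propFinalPiTriv}, whose argument runs through the chain $\Pi_1(b/\ffunctor{F}) \cong \ccolim_{a \in \catname{A}}\catname{B}(b, \ffunctor{F}a) \cong \ccolim_{b' \in \catname{B}}\catname{B}(b, b') \cong 1$. The first equivalence is \cref{propPiColimit}, identifying the fundamental groupoid of the slice with the bicolimit of the corepresentable $2$-diagram $\catname{B}(b, \ffunctor{F}-)$; the second is precisely where the $2$-finality hypothesis is invoked, transporting this bicolimit along $\ffunctor{F}$; and the third is the $2$-categorical Yoneda lemma. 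The complementary ingredient, \cref{rmkPiConn}, supplies the purely combinatorial dictionary between the algebraic fundamental groupoid and the connectedness notions of \cref{defConn}.

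There is essentially no obstacle remaining at this stage, as both inputs are already in hand. The only point one might wish to make explicit is that the finality hypothesis genuinely applies to the diagram used in \cref{propFinalPiTriv}, namely that $\catname{B}(b, \ffunctor{F}-)$ is a legitimate $2$-diagram valued in $\catcat$ whose relevant bicolimits exist; this is exactly what renders the middle equivalence $(1)$ of \cref{propFinalPiTriv} meaningful, so no separate verification is needed here.
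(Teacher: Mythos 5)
Your proposal is correct and is exactly the paper's argument: the proposition is stated as an immediate combination of \cref{propFinalPiTriv} and \cref{rmkPiConn}, with no further work. Your additional remarks about where the real content lies accurately reflect the structure of the preceding results.
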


There is a dual notion of \emph{2-initial 2-functor}, with a dual criterion,
proven by a duality argument.

\begin{definition}
 Let $\ffunctor{F} \colon \catname{A} \to \catname{B}$ be a $2$-functor between
 $(2,1)$-categories. The 2-functor is said to be \emph{2-initial} if, for any
 $2$-diagram $\ffunctor{D} \colon \catname{B} \to \catname{E}$, each of the bilimits
 $\llim_{\catname{B}}{\ffunctor{D}}$ and $\llim_{\catname{A}}{\ffunctor{D} \circ
   \ffunctor{F}}$ exists whenever the other one exists, and the canonical
 comparison $1$-morphism
 \[
   \llim_{\catname{B}}{\ffunctor{D}} \to \llim_{\catname{A}}{\ffunctor{D} \circ \ffunctor{F}}
 \]
 is an equivalence.
\end{definition}

\begin{proposition}
  Let $\ffunctor{F} \colon \catname{A} \to \catname{B}$ be a 2-functor between
  $(2,1)$-categories. The 2-functor $\ffunctor{F}$ is initial if and only if the
  2-functor $\catop{\ffunctor{F}} \colon \catop{\catname{A}} \to
  \catop{\catname{B}}$ is final.
\end{proposition}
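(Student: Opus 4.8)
The plan is to deduce the statement from the general duality between bilimits and bicolimits under the opposite construction of \cref{defOpCat}. The central observation is that $\catop{(-)}$ is a strictly involutive operation on $2$-categories and $2$-functors — so that $\catop{(\catop{\catname{C}})} = \catname{C}$, $\catop{(\catop{\ffunctor{F}})} = \ffunctor{F}$ and $\catop{\ffunctor{D}} \circ \catop{\ffunctor{F}} = \catop{(\ffunctor{D} \circ \ffunctor{F})}$ — and that it exchanges bilimits with bicolimits. Concretely, I would first establish, for a $2$-functor $\ffunctor{D} \colon \catname{I} \to \catname{C}$ and an object $T$ of $\catname{C}$, an isomorphism of categories pseudonatural in $T$:
\[
  [\catname{I}, \catname{C}](\Delta{T}, \ffunctor{D}) \;\cong\; [\catop{\catname{I}}, \catop{\catname{C}}](\catop{\ffunctor{D}}, \Delta{T}).
\]
This is purely formal: a pseudonatural transformation $\Delta{T} \Rightarrow \ffunctor{D}$ in $\catname{C}$ has $1$-morphism components $T \to \ffunctor{D}i$, which are exactly the components $\catop{\ffunctor{D}}i \to T$ of a pseudonatural transformation $\catop{\ffunctor{D}} \Rightarrow \Delta{T}$ in $\catop{\catname{C}}$, and the coherence $2$-isomorphisms match after reversing $1$-morphisms. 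Combined with the tautology $\catop{\catname{C}}(L, T) = \catname{C}(T, L)$, this shows that $L$ is a bilimit of $\ffunctor{D}$ in $\catname{C}$ if and only if it is a bicolimit of $\catop{\ffunctor{D}}$ in $\catop{\catname{C}}$, and the two agree as objects.

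Next I would use involutivity to set up the bijection of test data. Any $2$-diagram $\ffunctor{D}' \colon \catop{\catname{B}} \to \catname{E}'$ is of the form $\catop{\ffunctor{D}}$ for $\ffunctor{D} = \catop{\ffunctor{D}'} \colon \catname{B} \to \catop{\catname{E}'}$, so quantifying over all diagrams out of $\catop{\catname{B}}$ is the same as quantifying over all diagrams $\ffunctor{D}$ out of $\catname{B}$ (with $\catname{E} := \catop{\catname{E}'}$). Applying the duality lemma twice — once to $\ffunctor{D}$ over $\catname{B}$ and once to $\ffunctor{D} \circ \ffunctor{F}$ over $\catname{A}$ — translates the existence clauses directly: $\llim_{\catname{B}}\ffunctor{D}$ exists in $\catname{E}$ iff $\ccolim_{\catop{\catname{B}}}\catop{\ffunctor{D}}$ exists in $\catop{\catname{E}}$, and likewise $\llim_{\catname{A}}\ffunctor{D}\circ\ffunctor{F}$ exists iff $\ccolim_{\catop{\catname{A}}}\catop{(\ffunctor{D}\circ\ffunctor{F})}$ exists.

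The remaining point is to match the two comparison $1$-morphisms. Restriction of cones along $\ffunctor{F}$ corresponds, under the duality lemma, to restriction of cocones along $\catop{\ffunctor{F}}$, since the two operations act identically on the shared $1$-morphism components. Hence the limit comparison $c \colon \llim_{\catname{B}}\ffunctor{D} \to \llim_{\catname{A}}\ffunctor{D}\circ\ffunctor{F}$, a $1$-morphism of $\catname{E}$, and the colimit comparison $c' \colon \ccolim_{\catop{\catname{A}}}\catop{(\ffunctor{D}\circ\ffunctor{F})} \to \ccolim_{\catop{\catname{B}}}\catop{\ffunctor{D}}$, a $1$-morphism of $\catop{\catname{E}}$, are the same underlying $1$-morphism under the identity $\catname{E}(X, Y) = \catop{\catname{E}}(Y, X)$ — they point in opposite directions precisely because $\catop{(-)}$ reverses $1$-morphisms. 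Since a $1$-morphism is an equivalence in $\catname{E}$ if and only if it is an equivalence in $\catop{\catname{E}}$, the comparison $c$ is an equivalence iff $c'$ is. Running this equivalence of conditions over all diagrams yields exactly that $\ffunctor{F}$ is $2$-initial iff $\catop{\ffunctor{F}}$ is $2$-final.

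The main obstacle is bookkeeping rather than conceptual: one must respect the convention of \cref{defOpCat}, which reverses $1$-morphisms while keeping the orientation of $2$-morphisms inside each hom-category, and check that the induced comparison on functor $2$-categories carries pseudonatural transformations and modifications to the intended ones. Here the hypothesis that the categories in play are $(2,1)$-categories is what keeps the modification correspondence harmless: were $2$-morphisms not invertible, passing to the opposite would force a $\mathrm{co}$-direction on modifications, but since every modification is invertible the relevant hom-groupoids are canonically isomorphic to their opposites. Once the duality lemma is stated with care, the rest is a formal transport of \cref{def2Final} through the involution $\catop{(-)}$.
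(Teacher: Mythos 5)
Your argument is correct and is precisely the ``duality argument'' the paper invokes without writing out: the paper gives no proof of this proposition beyond that phrase, and your reduction via the involutivity of $\catop{(-)}$ together with the lemma $[\catname{I},\catname{C}](\Delta{T},\ffunctor{D}) \cong [\catop{\catname{I}},\catop{\catname{C}}](\catop{\ffunctor{D}},\Delta{T})$ is the intended formal transport of \cref{def2Final} through the opposite construction. Your closing remark about the direction of modifications being harmless in the $(2,1)$-setting is exactly the right point of care.
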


\begin{theorem}
  Let $\ffunctor{F} \colon \catname{A} \to \catname{B}$ be a 2-functor between
  $(2,1)$-categories. The 2-functor $\ffunctor{F}$ is initial if and only if,
  for any object $b \in B$, the slice $(2, 1)$-category $\ffunctor{F}/b$ is
  nonempty, connected and simply connected.
\end{theorem}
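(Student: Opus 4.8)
The plan is to deduce this theorem from \cref{thmFinal} by a pure duality argument, invoking the preceding proposition that $\ffunctor{F}$ is initial if and only if $\catop{\ffunctor{F}} \colon \catop{\catname{A}} \to \catop{\catname{B}}$ is final. Applying \cref{thmFinal} to the $(2,1)$-functor $\catop{\ffunctor{F}}$, the latter is final if and only if, for every object $b$, the slice $(2,1)$-category $b / \catop{\ffunctor{F}}$ formed inside $\catop{\catname{B}}$ is nonempty, connected and simply connected. It then remains only to relate this slice to the slice $\ffunctor{F}/b$ appearing in the statement.

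The first key step is to make the identification $b / \catop{\ffunctor{F}} = \catop{(\ffunctor{F}/b)}$ explicit. Unwinding \cref{defOpCat}, an object of $b / \catop{\ffunctor{F}}$ is a pair $(i, f)$ with $f \colon b \to \catop{\ffunctor{F}}(i)$ in $\catop{\catname{B}}$, that is a morphism $f \colon \ffunctor{F}(i) \to b$ in $\catname{B}$, which is exactly an object of $\ffunctor{F}/b$. Tracing the morphisms $(u, \mu)$ and $2$-morphisms $\alpha$ through the definition, and using that the opposite reverses the direction of $1$-morphisms but leaves that of $2$-morphisms unchanged, shows that a morphism $(i,f) \to (i',f')$ in $b / \catop{\ffunctor{F}}$ is precisely a morphism $(i',f') \to (i,f)$ in $\ffunctor{F}/b$, while the $2$-cells agree on the nose. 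Hence $b / \catop{\ffunctor{F}}$ is literally the opposite $(2,1)$-category of $\ffunctor{F}/b$. This bookkeeping — keeping straight that $\catop{}$ flips $1$-cells but not $2$-cells — is the only delicate point of the argument.

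The second key step is to record that the three properties of \cref{defConn} are invariant under passage to the opposite $(2,1)$-category. Nonemptiness is immediate since the objects are unchanged. A path in $\catname{C}$ becomes a path in $\catop{\catname{C}}$ by reversing each sign $\varepsilon_i$, yielding a bijection between paths with matching endpoints, so connectedness transfers. Moreover each of the five elementary homotopies is carried to an elementary homotopy under this reversal (the two composition cases are interchanged, and the face relation is sent to its mirror image, using again that $2$-isomorphisms survive the opposite unchanged); therefore the homotopy relation is preserved and simple connectivity transfers as well. Consequently $\catop{(\ffunctor{F}/b)}$ is nonempty, connected and simply connected if and only if $\ffunctor{F}/b$ is.

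Combining these steps gives the chain of equivalences: $\ffunctor{F}$ is initial $\iff$ $\catop{\ffunctor{F}}$ is final $\iff$ for all $b$ the slice $b / \catop{\ffunctor{F}} = \catop{(\ffunctor{F}/b)}$ is nonempty, connected and simply connected $\iff$ for all $b$ the slice $\ffunctor{F}/b$ is nonempty, connected and simply connected, which is the claim. The main obstacle is entirely contained in the identification of the second step; the invariance of the third step and the reduction of the first are formal.
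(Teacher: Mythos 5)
Your proof is correct and follows exactly the route the paper intends: the paper states the dual criterion is ``proven by a duality argument'' via the preceding proposition that $\ffunctor{F}$ is initial if and only if $\catop{\ffunctor{F}}$ is final, and your write-up simply makes that argument explicit (including the two points the paper leaves implicit, namely the identification $b/\catop{\ffunctor{F}} = \catop{(\ffunctor{F}/b)}$ and the invariance of nonemptiness, connectedness and simple connectedness under passage to the opposite $(2,1)$-category). No gaps; your bookkeeping of the fact that $\catop{(-)}$ reverses $1$-cells but not $2$-cells is exactly right for \cref{defOpCat}.
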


\section{Further directions}

There are various direction in which one may try to improve the finality
criterion presented in the previous section.

The most straightforward one is to work in the context of bicategories, where
composition of $1$-morphisms is only associative up to isomorphism. One should note
that the correct notions of $2$-finality for pseudofunctors between bicategories
with invertible $2$-morphisms should be weakened to include any pseudofunctor
as diagram, and not only the strict ones as we do in \cref{def2Final}. Since
not all pseudofunctors can be strictified (\autocite{lackBicatNotTriequivalent2007}),
the analogous result for pseudofunctors is not a direct corollary of \cref{thmFinal}.

Another natural route is to prove it for higher dimensions $n$. An analogous
combinatorial proof would require a combinatorial presentation of higher
homotopies in an $n$-category, and probably to set up a machinery for working
inductively on the dimension. An alternative, potentially more reasonable
approach may be to adapt Lurie's topological proof to finite dimensions.

\printbibliography

\end{document}